\documentclass[a4paper,11pt,reqno]{amsart}
\usepackage{marginnote,amsmath,amsfonts,amscd,amssymb,graphicx,mathrsfs,eufrak,dsfont}

\usepackage[dvips,all,arc,curve,color,frame]{xy}
\usepackage[usenames]{color}

\usepackage[colorlinks]{hyperref}
\usepackage{tikz,mathrsfs}
\usetikzlibrary{arrows,decorations.pathmorphing,decorations.pathreplacing,positioning,shapes.geometric,shapes.misc,decorations.markings,decorations.fractals,calc,patterns}

\addtolength{\hoffset}{-0.5cm} \addtolength{\textwidth}{1cm}
\addtolength{\voffset}{-1.5cm} \addtolength{\textheight}{2cm}

\newtheorem{thm}{Theorem}[section]
\newtheorem{prop}[thm]{Proposition}
\newtheorem{lemma}[thm]{Lemma}
\newtheorem{claim}[thm]{Claim}

\newtheorem{obs}[thm]{Observation}

\theoremstyle{definition}

\newtheorem{conj}[thm]{Conjecture}

\begin{document}
\title{Radius, Girth and Minimum degree}

\author{Vojt\u{e}ch Dvo\u{r}\'ak}
\address[Vojt\u{e}ch Dvo\u{r}\'ak]{Department of Pure Maths and Mathematical Statistics, University of Cambridge, UK}
\email[Vojt\u{e}ch Dvo\u{r}\'ak]{vd273@cam.ac.uk}

\author{Peter van Hintum}
\address[Peter van Hintum]{Department of Pure Maths and Mathematical Statistics, University of Cambridge, UK}
\email[Peter van Hintum]{pllv2@cam.ac.uk}

\author{Amy Shaw}

\author{Marius Tiba}
\address[Marius Tiba]{Department of Pure Maths and Mathematical Statistics, University of Cambridge, UK}
\email[Marius Tiba]{mt576@cam.ac.uk}
\begin{abstract}
Given a connected graph $G$ on $n$ vertices, with minimum degree $\delta\geq 2$ and girth at least $g \geq 4$, what is the maximum radius $r$ this graph can have? Erd\H{o}s, Pach, Pollack and Tuza \cite{Erdos} established in the triangle-free case ($g=4$) that $r \leq \frac{n-2}{\delta}+12$, and noted that up to the value of the additive constant, this is tight. We determine the exact value for the triangle-free case.\\
For higher $g$ little is known. We settle the order of $r$ for $g=6,8,12$ and prove an upper bound to the order for general even $g$. Finally, we show that proving the corresponding lower bound for general even $g$ is equivalent to the Erd\H{o}s girth conjecture.
\end{abstract}

\maketitle
\parindent 20pt
\parskip 0pt

\section{Introduction}\label{intro}

Consider the following question: given a connected graph $G$ on $n$ vertices, with minimum degree $\delta\geq 2$ and girth at least $g \geq 4$, what is the maximum radius $r$ this graph can have?
Denote this maximum value of the radius\footnote{for those triples $n, \delta,g$ with $\delta \geq 2$ for which there exists a connected graph on $n$ vertices, with minimum degree $\delta$ and of girth at least $g$} as $r(n, \delta,g)$.

Erd\H{o}s, Pach, Pollack and Tuza \cite{Erdos} studied this problem for the case $g=4$. They established that $r(n,\delta,4) \leq \frac{n-2}{\delta}+12$, and noted that up to the value of the additive constant, this is tight.

Our first result settles the triangle-free case fully.

\begin{thm}\label{main}
Consider any integers $\delta \geq 2$ and $n$. If $n<2\delta$, there exists no connected triangle-free graph on $n$ vertices with minimum degree $\delta$. If $n \in \left\{ 2\delta,2\delta+1 \right\}$, we have $r(n,\delta,4)=2$. If $2\delta+2 \leq n <4\delta$, we have $r(n,\delta,4)=3$. Most importantly, if $n \geq 4 \delta$, then $$r(n, \delta, 4)=\begin{cases}\frac{n}{\delta}-1 &\text{if $\delta$ is odd and $n=k \delta$ for $k$ odd}\\
\lfloor \frac{n}{\delta} \rfloor &\text{otherwise}
\end{cases}$$
\end{thm}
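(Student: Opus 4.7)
The proof splits into the small regimes $n < 4\delta$ (handled by direct counting) and the main regime $n \geq 4\delta$. For the small cases, the basic input is that in a triangle-free graph any edge $uv$ has $\{v\} \cup N(v) \cup (N(u) \setminus \{v\})$ containing $\geq 2\delta$ distinct elements (since $u, v$ share no neighbor), forcing $n \geq 2\delta$. The realizations $K_{\delta,\delta}$ (with a pendant for $n = 2\delta+1$) give $r=2$ on $n \in \{2\delta, 2\delta+1\}$, and analogous ``double-blown'' bipartite constructions give $r=3$ for $2\delta+2 \leq n < 4\delta$; refutations of larger radii come from the same closed-neighborhood counting pushed one step further.

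For $n \geq 4\delta$, the upper bound $r \leq \lfloor n/\delta \rfloor$ is the main task. Let $c$ be a center of eccentricity $r$, let $v$ be a vertex at distance $r$ from $c$, and fix a shortest path $c = u_0, u_1, \ldots, u_r = v$; denote the BFS layers by $V_i = \{w : d(c,w) = i\}$. The key input is the triangle-free disjointness $N(u_i) \cap N(u_{i+1}) = \emptyset$, giving $|N(u_i) \cup N(u_{i+1})| \geq 2\delta$ with this union inside $V_{i-1} \cup V_i \cup V_{i+1} \cup V_{i+2}$. The Erd\H{o}s--Pach--Pollack--Tuza argument already extracts $r \leq (n-2)/\delta + 12$ from such estimates; the task here is to absorb the additive constant entirely. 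I expect this final tightening to require (a) carefully tracking the boundary layers $V_0, V_r$ (where fewer constraints apply) and using the minimum-eccentricity property of $c$, (b) a parity pairing of indices $i$ along the path so that the ranges $[i-1, i+2]$ cover the layers with each $V_j$ counted at most twice (rather than four times in the naive sum, which only yields $r \leq 2n/\delta$), and (c) casework on whether a layer is ``fat'' or ``thin'', so that thin layers force their neighbors to be fat. This careful accounting is likely the hardest ingredient.

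For the matching lower bound, take the cyclic blowup of $C_{2r}$: partition $n$ vertices into $2r$ cyclic layers $L_0, \ldots, L_{2r-1}$ of positive sizes $a_0, \ldots, a_{2r-1}$, with complete bipartite connections between consecutive layers. The result is bipartite (hence triangle-free), has radius $r$, and minimum degree $\min_i(a_{i-1}+a_{i+1})$. Taking $a_i = \delta/2$ (for $\delta$ even), or the period-$4$ pattern $1, \delta-1, \delta-1, 1$ (for $\delta$ odd and $r$ even), gives $n = r\delta$ with minimum degree exactly $\delta$; surplus vertices are absorbed by enlarging individual $a_i$. The exceptional case $\delta$ odd, $n = k\delta$, $k$ odd reflects a parity obstruction: no cyclic sequence on $2k$ positions with $a_{i-1}+a_{i+1} \geq \delta$ attains $\sum a_i = k\delta$ when both $k, \delta$ are odd (one must have $\sum a_i \geq k\delta + 1$), so the blowup degrades to radius $k-1$. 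The matching upper bound $r \leq k-1$ then demands lifting this cyclic-blowup parity obstruction to arbitrary triangle-free graphs, via a parity-refined BFS-layer bookkeeping that tracks the sums over even- and odd-indexed layers separately; I anticipate this lifting to be the most delicate single step of the proof.
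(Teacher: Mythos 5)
There is a genuine gap: you never actually prove the upper bound, which is the heart of the theorem. For $n\geq 4\delta$ everything reduces to showing that a connected triangle-free graph with minimum degree $\delta$ and radius $r\geq 4$ has $n\geq 2\lceil r\delta/2\rceil$ (the ceiling is exactly what produces the exceptional case $\delta$ odd, $n=k\delta$, $k$ odd). Your plan for this is the Erd\H{o}s--Pach--Pollack--Tuza layer-counting scheme, i.e.\ summing $|N(u_i)\cup N(u_{i+1})|\geq 2\delta$ over a geodesic and controlling multiplicities, together with three announced but unexecuted refinements (boundary layers, parity pairing, fat/thin casework). That scheme is precisely the one that loses an additive constant (the $+12$ in EPPT), and you supply no mechanism that removes it; in particular the parity-refined ``lifting of the cyclic-blowup obstruction to arbitrary graphs,'' which you yourself flag as the most delicate step, is left entirely open. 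The paper proceeds differently: it produces an explicit set $T$ of $r$ vertices, spread along two (sometimes three) geodesics from a center chosen to maximize the number of eccentric vertices, such that every pair in $T$ is either adjacent or at distance at least $3$; triangle-freeness then makes the open neighbourhoods pairwise disjoint, giving $n\geq \delta|T|=\delta r$ directly, with the $+1$ parity gain coming from an unmatched vertex of $T$ when $|T|$ is odd (Lemmas \ref{generaltool} and \ref{maintool}). Realizing this requires a substantial case analysis on $r\bmod 4$ and on the defect $t$ of the second geodesic, plus a separate cycle lemma (Lemma \ref{cycletool}) for one stubborn configuration; none of this is visible in your sketch, and there is no evidence the layer-counting route can be pushed to the exact constant.

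Your lower-bound construction (the cyclic blowup of $C_{2r}$) and the parity obstruction for such blowups are correct and coincide with the paper's Proposition \ref{firstbit}. Two smaller issues in the small cases: a ``pendant'' attached to $K_{\delta,\delta}$ has degree $1$ and violates the minimum degree condition (use $K_{\delta,\delta+1}$, or generally $K_{\delta,n-\delta}$); and for $2\delta+2\leq n<4\delta$ you need both a radius-$3$ construction (the paper deletes a matching plus extra edges from $K_{\delta+1,n-\delta-1}$) and a proof that $r\leq 3$ in this range, the latter again resting on the unproved bound $n\geq 2\lceil r\delta/2\rceil$ for $r\geq 4$.
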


Next we consider the case when the girth $g$ is bigger than $4$. We obtain a general upper bound.

\begin{thm}\label{upbound}
Let $n,\delta \geq 2$, and $g=2k$  such that there exists a connected graph on $n$ vertices with minimum degree $\delta$ and girth at least $g$. Then $$r(n,\delta,g) \leq \frac{n k}{2 \delta(\delta-1)^{k-2}}+ 3 k.$$
\end{thm}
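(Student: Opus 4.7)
The plan is a ball-packing argument along a shortest path from the center. Let $c$ be a center of $G$ and fix a shortest path $c = u_0, u_1, \ldots, u_r$ from $c$ to a vertex at distance $r$. The girth hypothesis supplies two key facts: first, every ball $B(v, k-1)$ is a tree (any internal cycle would have length at most $2(k-1) < 2k$), so $|S(v, k-1)| \geq \delta(\delta-1)^{k-2}$; second, for any two consecutive path vertices, the spheres $S(u_i, k-1)$ and $S(u_{i+1}, k-1)$ are disjoint, since a common member would create a cycle of length $2(k-1) + 1 = 2k - 1$. Combining, $|S(u_i, k-1) \cup S(u_{i+1}, k-1)| \geq 2\delta(\delta-1)^{k-2}$.

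The strategy is to select a collection of consecutive pairs $(u_{a_j}, u_{a_j+1})$ along the path, with spacing $a_{j+1} - a_j$ chosen so that the unions $U_j = S(u_{a_j}, k-1) \cup S(u_{a_j+1}, k-1)$ are pairwise disjoint. With spacing $2k$, the closest between-pair centers lie at distance $2k - 1 > 2(k-1)$, so disjointness is automatic; this gives $n \geq 2\delta(\delta-1)^{k-2}\cdot\lfloor r/(2k)\rfloor$ and hence $r \leq kn/(\delta(\delta-1)^{k-2}) + O(k)$, a factor of $2$ short of the claim. Reducing the spacing to $k$ doubles the pair count but potentially introduces cross-pair overlaps, since the closest between-pair centers then lie at distance $k-1$. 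The key technical step is to show, via a finer cycle analysis exploiting the girth, that these cross-pair overlaps are of lower order (contributing $o(\delta(\delta-1)^{k-2})$ per pair), so that the summed contribution remains $\geq 2\delta(\delta-1)^{k-2}\cdot\lfloor r/k\rfloor$, which rearranges to the claimed bound.

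As a sanity check, for $k = 2$ (the triangle-free case) the pair $(u_{2j}, u_{2j+1})$ contributes $|N(u_{2j}) \cup N(u_{2j+1})| = 2\delta$, and cross-pair unions at spacing $2$ are disjoint since any common neighbor of $u_{2j+1}$ and $u_{2j+2}$ would form a triangle; this recovers the Erd\H{o}s--Pach--Pollack--Tuza bound.

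The main obstacle is the cross-pair overlap analysis for $k \geq 3$: at spacing $k$, such overlaps correspond to cycles of length up to $3(k-1)$, which are compatible with girth $\geq 2k$, so one cannot rule them out by the girth inequality alone. Instead one must leverage the tree structure of the balls $B(v, k-1)$ together with the fact that the paths realising the overlap must all pass near the main path, in order to show that the overlap contributes only lower-order terms. The additive $3k$ in the stated bound absorbs boundary losses from the first and last $\sim k$ vertices of the path, where the pair structure is incomplete.
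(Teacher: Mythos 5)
Your opening moves match the paper's: spheres $S(v,k-1)$ have size at least $\delta(\delta-1)^{k-2}$ by the girth condition, and spheres centred at adjacent vertices are disjoint because a common point would close a cycle of length at most $2k-1$. This is exactly the content of the paper's Lemma \ref{generaltool}. But your route to the factor of $2$ has a genuine gap. At spacing $k$ along a single path, consecutive pair-blocks have sphere centres at distance $k-1$, and your ``key technical step'' asserts that the resulting overlaps are of lower order. This is not merely unproven --- it is false in general for $k\geq 3$. Take $k=3$ and the incidence graph of a projective plane of order $q$ (girth $6$, $\delta=q+1$): for two point-vertices $p,p'$, which are at distance $2=k-1$, the spheres $S(p,2)$ and $S(p',2)$ each consist of all other point-vertices, so they coincide except for two elements. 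Nothing in the girth hypothesis prevents such near-total overlap from occurring along a geodesic (the paper's own lower-bound constructions in Proposition \ref{lobound} are chains of cage-like blocks in which exactly this local structure appears), so the claimed $o(\delta(\delta-1)^{k-2})$ overlap bound cannot be established by ``a finer cycle analysis,'' and the heuristic about paths passing near the main path does not substitute for one.

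The paper recovers the factor of $2$ by a different device: it keeps the safe spacing $2k$ but uses \emph{two} geodesics issuing from the centre $v_0$. Having fixed the path $v_0,\dots,v_r$, it picks a vertex $v'_{r-t}$ with $d(v_{2k},v'_{r-t})\geq r$ (such a vertex exists since $v_{2k}$ has eccentricity at least $r$) and a geodesic $v_0=v'_0,\dots,v'_{r-t}$; Observation \ref{triangle} then shows that every vertex $v'_j$ with $j$ not too small is at distance at least $2k-1$ from every selected $v_i$, so the set $T$ consisting of the pairs $\{v_{2ki},v_{2ki+1}\}$ and $\{v'_{2ki},v'_{2ki+1}\}$ (omitting a bounded number of indices near $v_0$ and near the far ends) satisfies the hypothesis of Lemma \ref{generaltool} with $|T|\geq \frac{2r}{k}-6$, giving $n\geq\bigl(\frac{2r}{k}-6\bigr)\delta(\delta-1)^{k-2}$ and hence the stated bound. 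If you want to salvage your write-up, replace the spacing-$k$ idea with this second-geodesic argument; your spacing-$2k$ computation along one path is correct but only yields half the claimed strength.
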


We also show that in the cases $g=6,8,12$, this is essentially best possible.

\begin{thm}\label{specialgirths}
Let $\delta \geq 2$ so that $\delta-1$ is a prime power.

Then $\exists n_i\to \infty$ so that $r(n_i,\delta,6)\geq \frac{3n_i}{2(\delta^2-\delta+1)}-3$.
Further, $\exists n_j\to \infty$ so that $r(n_j,\delta,8)\geq \frac{2n_j}{\delta^{3}-2\delta^{2}+2\delta}-4$.
Finally, $\exists n_k\to \infty$ so that $r(n_k,\delta,12)\geq \frac{3n_k}{((\delta-1)^{3}+1)(\delta^{2}-\delta+1)}-6$.
\end{thm}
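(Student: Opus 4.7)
The plan is to construct, for each $g \in \{6,8,12\}$, explicit graphs of minimum degree $\delta$ and girth $g$ realising the claimed radius bound, by chaining together several copies of the Levi (incidence) graph of the associated generalized polygon. Concretely, when $q := \delta-1$ is a prime power, let $H=H_g$ denote the Levi graph of the projective plane $\mathrm{PG}(2,q)$ for $g=6$, of a generalized quadrangle such as $W(q)$ for $g=8$, and of the split Cayley generalized hexagon for $g=12$. Each $H$ is a $\delta$-regular bipartite graph of girth exactly $g$ and diameter exactly $d := g/2$, with orders $|H_6| = 2(\delta^2-\delta+1)$, $|H_8| = 2(\delta^3-2\delta^2+2\delta)$, $|H_{12}| = 2((\delta-1)^3+1)(\delta^2-\delta+1)$, matching the denominators appearing in the theorem.

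Taking $N$ copies $H_1,\ldots,H_N$, picking in each copy two vertices $a_i, b_i$ at distance $d$, and identifying $b_i$ with $a_{i+1}$ for $i = 1, \ldots, N-1$ produces a graph $G_N$ with $|G_N| = N|H| - (N-1)$ vertices. The minimum degree is $\delta$ (identified vertices acquire degree $2\delta$, others keep degree $\delta$). The girth stays at $g$: any simple cycle visiting two adjacent copies would have to traverse their single shared vertex twice, which is impossible in a simple graph, so every cycle lies inside a single $H_i$ and has length at least $g$. Moreover the endpoint $a_1$ has eccentricity $Nd$ in $G_N$ (realised at $b_N$), so $\mathrm{diam}(G_N) = Nd$.

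The delicate step is the radius. The naive chain has its minimum-eccentricity vertex in the central copy with eccentricity only $\approx Nd/2$, a factor of two short of the target. To close this gap I would exploit the large automorphism group of $H$, which acts transitively on pairs of vertices at distance $d$, to choose the pairs $(a_i, b_i)$ consistently so that $G_N$ has enough internal symmetry, and attach a small cap structure of $O(d)$ vertices at each end of the chain forcing the minimum eccentricity up to $Nd - O(1)$ without spoiling the girth, minimum degree, or vertex count. Substituting into $r(G_N)/n_N \geq d/|H| - O(1/n_N)$ then yields the three ratios $3/(2(\delta^2-\delta+1))$, $2/(\delta^3-2\delta^2+2\delta)$, $3/(((\delta-1)^3+1)(\delta^2-\delta+1))$, with the theorem's additive constants $-3, -4, -6$ exactly absorbing the cost of the end-modification, which is naturally $d$ for each girth.

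The main obstacle throughout is this radius-versus-diameter gap: the bare chain of Levi graphs is only half as efficient as required, and the refined construction must recover the missing factor of two while simultaneously preserving girth, minimum degree, and order. Once the correct cap (or comparable symmetry-breaking modification) is identified, the three cases $g=6,8,12$ are handled uniformly by the same chaining scheme, the only difference being the ambient generalized polygon whose existence underlies the entire argument.
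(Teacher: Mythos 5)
There is a genuine gap at exactly the point you flag as ``the delicate step.'' Your chain of Levi graphs, glued by identifying vertices at distance $d=g/2$, has its center in the middle copy, and that vertex has eccentricity about $Nd/2$. Attaching a cap of $O(d)$ vertices at each end of the chain cannot raise the \emph{minimum} eccentricity to $Nd-O(1)$: the eccentricity of the central vertex changes by at most $O(d)$ under any such end-modification, so the radius of the modified graph is still about $Nd/2+O(d)$, a factor of two short of what the theorem requires. Appealing to the automorphism group of the generalized polygon does not help here either, since the obstruction is metric (the middle of a path-like chain is close to everything) rather than a lack of symmetry. So the proposed repair cannot work as described, and the proof is incomplete at its crucial step.

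The paper closes this gap with a different gluing. From each copy $H$ of the cage it deletes one edge $vw$ lying on a cycle; by the girth condition the endpoints then satisfy $d_{H'}(v,w)\geq g-1$, i.e.\ traversing a copy costs about $g$ rather than the diameter $g/2$. It then arranges $\lceil 2r/g\rceil$ such copies in a \emph{cycle}, adding an edge from $v_i$ to $w_{i+1}$ (indices mod the number of copies); these new edges restore the two deficient vertices in each copy to degree $\delta$, the girth is preserved, and now \emph{every} vertex must cross about half the copies, each at cost about $g$, to reach its antipode, so every eccentricity is at least $r$. This yields $r/n\approx \frac{g}{2f(g,\delta)}$, which is exactly the factor of two you are missing, and the three stated bounds then follow by plugging in the known orders of the incidence graphs of the projective plane, generalized quadrangle, and generalized hexagon --- the same objects you invoke. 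In short: your choice of building blocks and your arithmetic for the leading ratios are right, but the doubling must come from the edge-deletion-plus-cyclic-gluing trick, not from end caps on a path.
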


Note that in the result above, not only do the coefficients of the leading term in $\delta$ agree with the upper bound we have from Theorem \ref{upbound}, but even the coefficients of the second order terms agree.

It would be interesting to see whether the upper bound from Theorem \ref{upbound} is tight, at least up to some constant factor. As our final result, we obtain the following proposition.

\begin{prop}\label{erdos}
Let $r,c>0$, $g=2k$ and $n \leq c(r+1) \delta^{k-1}$, so that $r(n, \delta,g) \geq r$. Then there exists a connected graph of girth at least $2k$ on at most $(2k+1)c \delta^{k-1}$ vertices with at least $\frac{1}{2}\delta^{2}(\delta-1)^{k-2}$ edges.
\end{prop}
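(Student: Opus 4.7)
The plan is to locate, within $G$, a small ball that the minimum degree forces to be dense. Let $G$ realise $r(n,\delta,g)\geq r$, so $G$ has $n$ vertices, minimum degree $\delta$, girth at least $2k$ and radius at least $r$. Pick any vertex $v$; its eccentricity is at least $r$, so BFS from $v$ partitions $V(G)$ into levels $L_0,L_1,\dots,L_R$ with $R\geq r$ and $\sum_j|L_j|=n\leq c(r+1)\delta^{k-1}$.

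The first step is averaging: I want $2k+1$ consecutive BFS levels whose union is small. Partitioning $\{0,1,\dots,R\}$ into consecutive blocks of $2k+1$ levels and applying pigeonhole produces a window $L_i,L_{i+1},\dots,L_{i+2k}$ of total size essentially $(2k+1)c\delta^{k-1}$. Let $u$ be any vertex of the middle level $L_{i+k}$. By the triangle inequality, whenever $d(u,w)\leq k$ we have $d(v,w)\in[i,i+2k]$, so $B_k(u)\subseteq L_i\cup\dots\cup L_{i+2k}$.

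Let $H$ be the subgraph of $G$ induced on $B_k(u)$. It is connected (every $w\in B_k(u)$ joins $u$ via a shortest path that stays inside $B_k(u)$), has girth at least $2k$ as a subgraph of $G$, and has at most $(2k+1)c\delta^{k-1}$ vertices, meeting the size constraint. For the edge count, every $w\in B_{k-1}(u)$ has all of its $G$-neighbours inside $B_k(u)$, so $\deg_H(w)=\deg_G(w)\geq\delta$, yielding $2|E(H)|\geq\delta|B_{k-1}(u)|$. The standard Moore-type argument — any cycle, merge, or cross-edge inside the BFS tree up to depth $k-1$ would close a cycle of length less than $2k$ — gives $|L_j(u)|\geq\delta(\delta-1)^{j-1}$ for $1\leq j\leq k-1$, and in particular $|B_{k-1}(u)|\geq\delta(\delta-1)^{k-2}$. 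Substituting delivers $|E(H)|\geq\tfrac{1}{2}\delta^{2}(\delta-1)^{k-2}$, which is exactly the claimed bound.

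The delicate point is the averaging. Partitioning $R+1\geq r+1$ levels into blocks of size $2k+1$ gives $\lfloor(R+1)/(2k+1)\rfloor$ full blocks, and the smallest has at most roughly $c(r+1)(2k+1)\delta^{k-1}/(r-2k+1)$ vertices — larger than the target $(2k+1)c\delta^{k-1}$ by a factor $(r+1)/(r-2k+1)$. This small discrepancy is the only step that is not entirely transparent, and must be absorbed either by a slightly more careful partition (shifting block boundaries to optimise over starting index) or by restricting to the regime $r\gg k$, which is precisely the regime in which the proposition feeds into Erd\H{o}s' girth conjecture.
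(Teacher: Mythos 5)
Your overall strategy --- average to find a small radius-$k$ ball, then lower-bound its edge count via the minimum degree together with the Moore-type bound $|B_{k-1}(u)|\geq\delta(\delta-1)^{k-2}$ --- is exactly the paper's, and the second half of your argument (connectivity, girth, and the edge count $2|E(H)|\geq\delta|B_{k-1}(u)|\geq\delta^{2}(\delta-1)^{k-2}$) is complete and correct. The problem is the averaging step, and you have correctly diagnosed that it does not close: partitioning the BFS levels into \emph{disjoint} blocks of $2k+1$ levels gives at most $\lfloor (R+1)/(2k+1)\rfloor$ blocks, so the smallest block is only bounded by roughly $(2k+1)c(r+1)\delta^{k-1}/(r-2k)$, which exceeds the target $(2k+1)c\delta^{k-1}$ by the factor $(r+1)/(r-2k)$. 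Neither of your proposed remedies repairs this: restricting to $r\gg k$ changes the statement (the proposition asserts the exact constant $(2k+1)c\delta^{k-1}$ for all $r>0$), and shifting the boundaries of a disjoint partition still leaves you dividing by the number of blocks rather than by $r+1$.

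The fix --- and this is what the paper does --- is to use \emph{overlapping} windows rather than a partition. Take a geodesic $v_0,\dots,v_r$ from a center to a vertex at distance $r$ and consider the $r+1$ balls $Q(v_i)=B_k(v_i)$. Since $d(v_i,v_j)=|i-j|$ along a geodesic, a vertex $w$ lies in $Q(v_i)$ and $Q(v_j)$ only if $|i-j|\leq 2k$, so each vertex of $G$ lies in at most $2k+1$ of these balls. Hence $\sum_{i=0}^{r}|Q(v_i)|\leq(2k+1)n\leq(2k+1)c(r+1)\delta^{k-1}$, and dividing by the $r+1$ sets gives some $Q(v_i)$ with at most $(2k+1)c\delta^{k-1}$ vertices --- with no loss. (Equivalently, in your BFS-level language: slide a window of $2k+1$ consecutive levels centered at each of the $R+1\geq r+1$ levels; each level is covered at most $2k+1$ times, and a vertex $u$ in the center level of the smallest window satisfies $B_k(u)\subseteq$ that window.) With this replacement your argument becomes a complete proof identical in substance to the paper's.
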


This relates the question whether the upper bound from Theorem \ref{upbound} is tight up to some constant factor to the following girth conjecture of Erd\H{o}s \cite{Erdosgirth}.

\begin{conj} (Erd\H{o}s)
For any positive integers $l,n$, there exists a graph with girth $2l+1$, $n$ vertices and $\Omega(n^{1+\frac{1}{l}})$ edges.
\end{conj}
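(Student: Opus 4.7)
My approach is to carve out the required subgraph as a ball of radius $k$ inside the graph $G$ that realises $r(n,\delta,2k)\geq r$ (so $G$ has $n$ vertices, minimum degree $\delta$, girth $\geq 2k$, and radius $\geq r$). First, let $v$ be a centre of $G$ and run BFS from $v$, producing non-empty layers $L_0=\{v\},L_1,\ldots,L_R$ with $R\geq r$. For each $a\in\{0,1,\ldots,r\}$ set the slab $S_a=L_{a-k}\cup L_{a-k+1}\cup\cdots\cup L_{a+k}$ (with out-of-range layers taken empty). Every $L_i$ lies in at most $2k+1$ of the slabs $\{S_a\}_{a=0}^r$, so $\sum_{a=0}^{r}|S_a|\leq(2k+1)n\leq(2k+1)c(r+1)\delta^{k-1}$, and hence some $a^{\ast}\in\{0,\ldots,r\}$ satisfies $|S_{a^{\ast}}|\leq(2k+1)c\delta^{k-1}$. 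Pick any $u\in L_{a^{\ast}}$ and let $H=G[B_G(u,k)]$. Since prefixes of shortest paths are shortest paths, $B_G(u,k)\subseteq S_{a^{\ast}}$; thus $|V(H)|\leq(2k+1)c\delta^{k-1}$, $H$ is connected, and $H$ inherits girth $\geq 2k$ from $G$.

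The main calculation is lower-bounding $|E(H)|$. The girth hypothesis makes $B_G(u,k-1)$ tree-like: two distinct shortest $u$-to-$w$ paths of length $\leq k-1$ would combine to a cycle of length $\leq 2k-2$, and an edge joining two vertices at common distance $d\leq k-1$ from $u$ would (via shortest $u$-paths together with that edge) form a cycle of length $\leq 2d+1\leq 2k-1$. Hence the induced subgraph on $B_G(u,k-1)$ is a tree $T$ rooted at $u$ in which the root has $\geq\delta$ children and every other internal vertex has $\geq\delta-1$ children, so $T$ has at least $\delta(\delta-1)^{k-2}$ leaves at distance exactly $k-1$ from $u$. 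Each such leaf $\ell$ has $\geq\delta$ neighbours in $G$; one is its $T$-parent, none is another leaf (by the same cycle-length argument), so $\ell$ has $\geq\delta-1$ neighbours at distance exactly $k$ from $u$, all lying inside $B_G(u,k)$ and hence in $H$. Summing over the leaves and counting each such edge once, $H$ contains at least $(\delta-1)\cdot\delta(\delta-1)^{k-2}=\delta(\delta-1)^{k-1}\geq\tfrac{1}{2}\delta^{2}(\delta-1)^{k-2}$ edges, the last inequality being equivalent to $2(\delta-1)\geq\delta$, i.e.\ $\delta\geq 2$.

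The one caveat is that we need $\mathrm{ecc}_G(u)\geq k-1$ so that the distance-$(k-1)$ level is non-empty. Since $u\in L_{a^{\ast}}$ and $\mathrm{ecc}_G(v)\geq r$, the triangle inequality gives $\mathrm{ecc}_G(u)\geq\max(a^{\ast},\,r-a^{\ast})\geq r/2$, which is $\geq k-1$ whenever $r\geq 2k-2$. In the complementary regime $r<2k-2$ the hypothesis already forces $n<(2k+1)c\delta^{k-1}$, and one simply takes $H=G$ itself: $\delta\geq 2$ combined with girth $\geq 2k$ forces $G$ to contain a cycle of length $\geq 2k$, and any two antipodal vertices on such a cycle are at $G$-distance $\geq k$ (any shortcut would close a shorter cycle), so some $u'\in G$ has $\mathrm{ecc}_G(u')\geq k$, and the leaf-counting applied around $u'$ again yields $\geq\tfrac{1}{2}\delta^{2}(\delta-1)^{k-2}$ edges, now inside $G$. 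I expect the technical core to be the tree structure of $B_G(u,k-1)$ and the ensuing leaf-edge bookkeeping, which pleasingly collapses to the single inequality $2(\delta-1)\geq\delta$; the slab pigeonhole and the small-$r$ edge case are then routine.
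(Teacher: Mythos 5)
The statement you were asked to prove is Erd\H{o}s's girth conjecture, which the paper records precisely because it is \emph{open}: no proof is known in general (the cases $l=1,2,3,5$ follow from generalized polygons, which is also where the paper's Theorem \ref{specialgirths} comes from), and the paper offers none. Your argument does not close this gap; it assumes it away. Your very first step takes ``the graph $G$ that realises $r(n,\delta,2k)\geq r$'' together with the bound $n\leq c(r+1)\delta^{k-1}$. The existence of such graphs --- connected, minimum degree $\delta$, girth at least $2k$, radius at least $r$, on only $O(r\delta^{k-1})$ vertices --- is exactly the unproved hypothesis of the paper's Proposition \ref{erdos}, and the paper's whole point is that establishing this hypothesis (for fixed $c$ and $\delta\to\infty$) would \emph{imply} the conjecture for $l=k-1$. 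So your proof is conditional on a statement at least as hard as the one you are trying to prove, and the conjecture cannot be derived from anything in this paper.

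What you have actually written is a correct and essentially complete proof of Proposition \ref{erdos} itself: the pigeonhole selection of a cheap ball $B_G(u,k)$ along the geodesic (your slab decomposition is a slightly cleaner version of the paper's sets $Q(v_0),\dots,Q(v_r)$), the tree structure of $B_G(u,k-1)$ forced by the girth condition, and the count of at least $\delta(\delta-1)^{k-1}\geq\tfrac{1}{2}\delta^{2}(\delta-1)^{k-2}$ edges all match, and in places make more careful, the paper's argument for that proposition. But even granting the hypothesis, you would obtain dense high-girth graphs only for a sparse sequence of vertex counts $N\approx(2k+1)c\delta^{k-1}$, whereas the conjecture asks for every $n$; the (standard) interpolation step is also missing. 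The essential failure, though, is the first one: you have proved the paper's conditional reduction, not the conjecture.
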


We see that if for some fixed $g=2k$ and fixed $c>0$, we could find graphs $G_i$ with $\delta_i \to \infty$ and $n_i \leq c(r(n_i,\delta_i,2k)+1) \delta_i^{k-1}$, then by Proposition \ref{erdos}, that would verify the girth conjecture of Erd\H{o}s for $l=k-1$.

The structure of the paper is as follows: in Section \ref{setup}, we establish a key technical lemma. In the next two sections, we consider separately the triangle-free case and the general case.

\section{Strategy}\label{setup}

Throughout the paper, we will use the following lemma as a useful tool. It tells us that if we can find a large collection of vertices in our graph such that any two elements are either neighbours or sufficiently far away from each other, then our graph must in fact have many vertices.

\begin{lemma}\label{generaltool}
Assume $G$ is a graph on $n$ vertices of girth $g \geq 2k$ (where $k \geq 2$) with minimum degree $\delta$. Then for any subset $T \subset V(G)$ such that all pairs of non-adjacent vertices in T have distance at least $2k-1$ from each other, we have $n \geq |T|\delta(\delta-1)^{k-2} $. Moreover if $|T|$ is odd, we have $n \geq |T|\delta(\delta-1)^{k-2} +1$.
\end{lemma}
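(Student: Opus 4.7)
\textbf{Proof plan for Lemma \ref{generaltool}.}

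The plan is to run a standard sphere-packing / Moore-type argument. For each $v\in T$, let $N_v=\{w\in V(G):d(v,w)=k-1\}$ be the sphere of radius $k-1$ around $v$. Since the girth is at least $2k$, the ball $B_{k-1}(v)$ is a tree; combined with minimum degree $\delta$, a level-by-level count gives $|N_v|\geq \delta(\delta-1)^{k-2}$. The core of the proof is to show that the spheres $N_v$ (as $v$ runs over $T$) are pairwise disjoint. Split into two cases: if $u,v\in T$ are adjacent, any common element $w\in N_u\cap N_v$ would produce a closed walk of length $(k-1)+1+(k-1)=2k-1$, hence a cycle of length $<2k$, contradicting the girth hypothesis. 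If $u,v\in T$ are non-adjacent, then by hypothesis $d(u,v)\geq 2k-1$, but $w\in N_u\cap N_v$ gives $d(u,v)\leq 2(k-1)=2k-2$, again a contradiction. Setting $X=\bigcup_{v\in T}N_v$, this yields $n\geq|X|\geq |T|\,\delta(\delta-1)^{k-2}$.

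For the stronger bound when $|T|$ is odd, I would split on $k$. When $k\geq 3$, the condition forces every pair in $T$ to be at distance $1$ or at distance $\geq 2k-1$, and in particular never at distance $k-1$; thus $T\cap X=\emptyset$ and one actually gains $|T|$ extra vertices, giving even $n\geq |T|(\delta(\delta-1)^{k-2}+1)$.

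The delicate case is $k=2$, where $N_v$ is just the neighbourhood of $v$ and the inclusion $T\cap X\neq\emptyset$ is possible. Here the key observation is that the induced subgraph on $T$ must be a matching together with isolated vertices: if some $v\in T$ had two neighbours $u_1,u_2$ in $T$, then either $u_1\sim u_2$ (forming a triangle, forbidden by $g\geq 4$) or $u_1\not\sim u_2$, in which case they are non-adjacent in $T$ but share the common neighbour $v$, forcing $d(u_1,u_2)\leq 2$ in $G$ and contradicting the distance-$\geq 3$ assumption. Since a matching on an odd set leaves at least one unmatched vertex $v_0\in T$, this $v_0$ has no neighbour in $T$ and therefore lies outside $X$. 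This supplies the extra vertex, giving $n\geq|X|+1\geq |T|\delta+1=|T|\delta(\delta-1)^{k-2}+1$, as required.

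The main obstacle is precisely this parity refinement in the $k=2$ regime; the bulk of the lemma reduces to a routine disjoint-spheres packing, but extracting the ``$+1$'' requires the structural observation above about induced subgraphs of $T$.
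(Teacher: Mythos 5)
Your proposal is correct and follows essentially the same route as the paper: disjoint spheres of radius $k-1$ around the vertices of $T$, the Moore-type level count $|N_v|\geq\delta(\delta-1)^{k-2}$, and the observation that an odd $|T|$ leaves a vertex with no neighbour in $T$ (hence lying in no sphere) to supply the extra $+1$. The only cosmetic differences are your use of the odd-closed-walk argument for disjointness of spheres around adjacent vertices and your case split $k=2$ versus $k\geq 3$ for the parity refinement, where the paper's single argument (each vertex of $T$ has at most one neighbour in $T$) covers both cases at once.
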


\begin{proof}
For every $v \in T$, let $S(v)= \left\{ w \in V(G) \, \, \, | \, \, \, d(v,w)=k-1 \right\}$. 

First, we claim that for any distinct $v_1,v_2 \in T$, the sets $S(v_1),S(v_2)$ are disjoint. To see that, consider two cases: if $d(v_1,v_2) \geq 2k-1$ and for some $a \in V(G)$ we have $a \in S(v_1) \cap S(v_2)$, then we get $2k-1 \leq  d(v_1,v_2) \leq d(v_1,a)+d(a,v_2)=2k-2$, yielding a contradiction. If $d(v_1,v_2) =1$ and for some $a \in V(G)$ we have $a \in S(v_1) \cap S(v_2)$, let $v_1,c_1,...,c_{k-2},a$ be a path of length $k-1$ from $v_1$ to $a$, and let $v_2,d_1,...,d_{k-2},a$ be a path of length $k-1$ from $v_2$ to $a$. First note we can not have $c_1=v_2$, as that would imply $d(v_2,a) \leq k-2$. Analogously we can not have $d_1=v_1$. But that now implies that the subgraph of $G$ spanned by $v_1,v_2,a,c_1,...,c_{k-2},d_1,...,d_{k-2}$ contains a cycle of length at most $2k-1$, contradicting the assumption that $g \geq 2k$.

Next we claim that for any $v \in T$, we have $|S(v)| \geq \delta(\delta-1)^{k-2}$. Assume for contradiction not. Let $i$ be smallest integer $2 \leq i \leq k$ such that for $S_i(v)=\left\{ w \in V(G) \, \, \, | \, \, \, d(v,w)=i-1 \right\}$, we have $|S_i(v)| < \delta(\delta-1)^{i-2}$. Note that trivially $i \geq 3$, as for $i=2$ this is just the minimum degree condition.

Now, consider any two different $a,b \in S_{i-1}(v)$. Note that every neighbour of $a$ (and analogously of $b$) must be either in $S_{i-2}(v)$, $S_{i-1}(v)$ or $S_{i}(v)$ by definition.

Firstly, note that $a$ and $b$ can not be connected together, as that would contradict the girth assumption. Next, note that $a$ (and analogously $b$) can be connected to at most one vertex in $S_{i-2}(v)$, else we would again contradict girth assumption. Finally, again by girth assumption, note that $a$ and $b$ can not share any neighbour in $S_i(v)$. Altogether, this implies $|S_i(v)| \geq (\delta-1)|S_{i-1}(v)| \geq \delta(\delta-1)^{i-2} $, a desired contradiction.

It now follows $n=|V(G)| \geq |\cup_{v \in T} S(v)|=\sum_{v \in T}|S(v)| \geq |T|\delta(\delta-1)^{k-2}$.

Finally consider the case when $|T|$ is odd. Note that every vertex in $T$ can be at distance $1$ from at most one other vertex of $T$, else we would get either $C_3$ in $T$ or a pair of vertices of $T$ at mutual distance $2$. Hence, if $|T|$ is odd, some vertex in $T$ is at distance at least $2k-1$ from every other vertex in $T$. Hence it was not counted in the calculation above, giving $n \geq |T|\delta(\delta-1)^{k-2} +1$.
\end{proof}

To find such a large collections of points with restricted mutual distances, we will use several observations. Those can be found in Appendix \ref{observ}.

\section{Triangle-Free Graphs}

To prove Theorem \ref{main}, we will establish the following three propositions.

\begin{prop}\label{smallcases}
Fix $\delta \geq 2$ and $n$. Then every connected triangle-free graph on $n$ vertices with radius $r$ satisfies $r \geq 2$ and $n \geq 2 \delta$. Moreover, if $r=3$, we have $n \geq 2 \delta+2$.

Further, for any $n \geq 2\delta$, we have a connected triangle-free graph on $n$ vertices with radius $2$. And for any $n \geq 2\delta+2$, we have a connected triangle-free graph on $n$ vertices with radius $3$.
\end{prop}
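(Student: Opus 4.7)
The proposition is a conjunction of three lower bounds and two matching constructions, which I would prove in turn.

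The lower bounds all rest on the same triangle-free observation: two adjacent vertices have disjoint (open) neighbourhoods. To show $r\geq 2$, I would argue that if $r=1$ then some vertex $c$ is adjacent to everything, forcing $V\setminus\{c\}$ to be independent, so every other vertex has degree $1$, contradicting $\delta\geq 2$. For $n\geq 2\delta$, I would pick any edge $uv$ and use
\[ n \;\geq\; |N(u)\cup N(v)| \;=\; |N(u)|+|N(v)| \;\geq\; 2\delta. \]
For the bound $n\geq 2\delta+2$ when $r\geq 3$, I would pick any vertex $u$ together with a vertex $w$ at distance at least $3$ from $u$ (which exists since every eccentricity is $\geq r\geq 3$); then $\{u\},\{w\},N(u),N(w)$ are pairwise disjoint because a common neighbour of $u,w$ would force $d(u,w)\leq 2$, yielding $n\geq 1+1+\delta+\delta = 2\delta+2$.

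For the constructions, I would handle radius $2$ by the complete bipartite graph $K_{\delta,n-\delta}$, which is triangle-free, connected, of minimum degree $\delta$ (since $n\geq 2\delta$), and has every pair of same-side vertices at distance $2$. For radius $3$ I would first build a base graph $G_0$ on exactly $2\delta+2$ vertices $\{u,v,x_1,\dots,x_\delta,y_1,\dots,y_\delta\}$ with edges $ux_i$ and $vy_j$ for all $i,j$, and $x_iy_j$ whenever $i\neq j$ (so the $x$--$y$ part is the bipartite complement of a perfect matching; for $\delta=2$ this is just $C_6$). A direct check shows $G_0$ is triangle-free and $\delta$-regular, and that every vertex has eccentricity exactly $3$: the critical pairs $(u,v)$ and $(x_i,y_i)$ are at distance $3$ (neither pair has a common neighbour), while every other pair of vertices is within distance $3$. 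To extend to arbitrary $n\geq 2\delta+2$, I would attach $n-(2\delta+2)$ further vertices $z_1,z_2,\ldots$, each with $N(z_l)=\{y_1,\dots,y_\delta\}$ (so each $z_l$ is a ``duplicate of $v$''); each such $z_l$ has degree $\delta$ and an independent neighbourhood (so no triangle is created), and $d(z_l,u)=3$ keeps every eccentricity equal to $3$.

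The one step with a bit of casework is verifying that every vertex of the extended construction has eccentricity exactly $3$: this splits into checks between the five vertex classes $\{u\},\{v\},\{x_i\},\{y_j\},\{z_l\}$, each of which reduces to a one-line common-neighbour calculation. Beyond this, I do not anticipate any real obstacle: the lower bounds are simply neighbourhood-disjointness arguments, and the constructions are essentially forced by those arguments on the tight side.
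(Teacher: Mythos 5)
Your proposal is correct and follows essentially the same route as the paper: the lower bounds are the same disjoint-neighbourhood counts (the paper packages the $n\geq 2\delta$ step as an application of its Lemma on sets with no two vertices at distance $2$, but for a single edge this is exactly your $|N(u)|+|N(v)|\geq 2\delta$ argument), and the radius-$2$ example is the identical $K_{\delta,n-\delta}$. Your radius-$3$ graph is a slightly different explicit instance of the same ``complete bipartite minus a matching'' idea the paper uses (it deletes a matching from $K_{\delta+1,n-\delta-1}$ together with the extra edges at one vertex), so there is nothing substantive to flag.
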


\begin{prop}\label{firstbit}
Let $r \geq 4$, $\delta \geq 2$, $c \geq 0$ be integers. Then there exists a connected triangle-free graph with $n=2 \lceil \frac{r \delta}{2} \rceil +c$ vertices, minimum degree $\delta$ and radius $r$.
\end{prop}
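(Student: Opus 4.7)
The plan is to produce an explicit construction: a base graph $G_0$ on $2\lceil r\delta/2\rceil$ vertices realising the case $c=0$, then vertex-duplication to add the remaining $c$ vertices.

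\emph{Base graph.} I would construct $G_0$ as a cyclic blow-up of $C_{2r}$: place $2r$ independent sets $L_0, L_1, \ldots, L_{2r-1}$ in a cycle and put a complete bipartite graph between $L_i$ and $L_{i+1}$ for every $i$ (indices taken mod $2r$). Since $2r$ is even, $G_0$ is bipartite, hence triangle-free. A vertex in $L_i$ has degree $|L_{i-1}|+|L_{i+1}|$, and every vertex has eccentricity exactly $r$ because antipodal layers $L_i, L_{i+r}$ are at graph-distance $r$, while distinct same-layer vertices are at distance $2\leq r$ (using $r\geq 4$). For the sizes I would take the period-$4$ pattern $\lceil\delta/2\rceil, \lceil\delta/2\rceil, \lfloor\delta/2\rfloor, \lfloor\delta/2\rfloor, \ldots$ around the cycle. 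A direct check shows that $|L_{i-1}|+|L_{i+1}|\in\{\delta,\delta+1\}$ at every position (with the value $\delta+1$ only appearing at the two positions where the cycle closes on two consecutive $\lceil\delta/2\rceil$'s, which happens precisely when $r$ is odd). Summing, $\sum_i|L_i|$ equals $r\delta$ if $r$ is even, and $r\delta+1$ if $r$ is odd; in both cases this matches $2\lceil r\delta/2\rceil$ exactly.

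\emph{Extension to $c>0$.} I would iteratively add vertices by duplication: given the current graph, pick any vertex $v$ and add a new vertex $w$ with $N(w):=N(v)$. Each such duplication preserves the required properties. First, $\deg(w)=\deg(v)\geq\delta$, and degrees of other vertices only increase (for $u\in N(v)$). Second, triangle-freeness is preserved: any two neighbours of $w$ are neighbours of $v$, hence non-adjacent by triangle-freeness of the current graph. Third, for every $x\neq v,w$ we have $d(w,x)=d(v,x)$ — any shortest path through $w$ may be rerouted through $v$ at no extra cost — while $d(w,v)=2$; therefore the eccentricities of existing vertices are unchanged, and $\mathrm{ecc}(w)=\max(2,\mathrm{ecc}(v))=r$ (using $r\geq 4>2$). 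Iterating this procedure $c$ times produces the required graph on $2\lceil r\delta/2\rceil+c$ vertices.

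\emph{Main obstacle.} The delicate part is verifying that the chosen layer-size pattern simultaneously satisfies the degree constraint $|L_{i-1}|+|L_{i+1}|\geq\delta$ and has total $\sum|L_i|=2\lceil r\delta/2\rceil$, for every parity combination of $r$ and $\delta$. The period-$4$ pattern resolves both at once, but one should separately check the cyclic wrap-around for $r$ odd versus $r$ even; the small surplus of $+1$ that appears when $r$ is odd is exactly the surplus between $r\delta$ and $2\lceil r\delta/2\rceil$, so everything fits.
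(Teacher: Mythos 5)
Your construction is exactly the paper's: a blow-up of $C_{2r}$ with layer sizes following the period-$4$ pattern $\lceil\delta/2\rceil,\lceil\delta/2\rceil,\lfloor\delta/2\rfloor,\lfloor\delta/2\rfloor,\dots$, the $c$ extra vertices being absorbed by enlarging layers (your vertex-duplication step is the same operation as the paper's ``put the remaining vertices into any of these boxes''). One tiny slip in your verification: for $r$ odd the total is $(r-1)\delta+2\lceil\delta/2\rceil$, which equals $r\delta+1$ only when $\delta$ is also odd (and equals $r\delta$ when $\delta$ is even), but in every case it does equal $2\lceil r\delta/2\rceil$, so the conclusion stands.
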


\begin{prop}\label{secondbit}
If $G$ is a connected triangle-free graph on $n$ vertices with minimum degree $\delta \geq 2$ and radius $r \geq 4$, then we have $n \geq 2 \lceil \frac{r \delta}{2} \rceil$.
\end{prop}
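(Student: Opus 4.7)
The plan is to apply Lemma \ref{generaltool} with $k=2$: since $G$ has girth at least $4$, any subset $T\subseteq V(G)$ whose non-adjacent pairs lie at distance $\geq 3$ in $G$ produces $n \geq |T|\delta$, and $n \geq |T|\delta + 1$ when $|T|$ is odd. The target bound $n\geq 2\lceil r\delta/2\rceil$ matches these conclusions precisely when $|T|=r$: if $r\delta$ is even then $2\lceil r\delta/2\rceil = r\delta$, and if $r\delta$ is odd (forcing both $r$ and $\delta$ odd) then $|T|=r$ is itself odd and the ``moreover'' clause of the lemma delivers the extra $+1$, giving $r\delta+1 = 2\lceil r\delta/2\rceil$. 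So the entire task reduces to producing $T$ of size $r$ such that no two elements of $T$ are at distance exactly $2$ in $G$ (equivalently in triangle-free graphs, no two elements share a common neighbor).

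For the construction I would fix a center $c$ with $\mathrm{ecc}(c)=r$, a vertex $u$ with $d(c,u)=r$, and a shortest path $P:c=p_0,p_1,\dots,p_r=u$ along which all subdistances are exact ($d(p_i,p_j)=|i-j|$). Since every vertex of $G$ has eccentricity at least $r$, in particular $\mathrm{ecc}(u)\geq r$ provides a second geodesic $Q$ of length $\geq r$ starting at $u$ (possibly bending back toward the interior of $P$), so the combined structure $P\cup Q$ affords a walk of total length up to $2r$. Using the observations from Appendix \ref{observ} together with the minimum-degree and triangle-free conditions (each interior path vertex $p_i$ has $\geq \delta-2$ off-path neighbors, and any such off-path neighbor is at distance exactly $2$ from $p_{i\pm 1}$ via $p_i$), I would assemble $T$ out of well-spaced adjacent pairs $(p_{4i},p_{4i+1})$ along the extended structure, filling remaining slots with carefully selected off-path neighbors whose positions avoid distance-$2$ collisions with earlier selections.

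The main obstacle is reaching $|T|=r$: a single geodesic of length $r$ only admits roughly $r/2$ admissible vertices under the ``take two, skip three'' pattern, because any off-path neighbor of $p_i$ automatically conflicts at distance $2$ with $p_{i\pm 1}$. The eccentricity-$\geq r$ condition at $u$ (and symmetrically at other peripheral vertices) is what effectively doubles the accessible structure, and the appendix observations encode the precise way in which these ingredients combine to guarantee $r$ well-separated vertices. A particularly delicate case is $r$ odd, $\delta$ even, where producing $|T|=r$ would by itself yield the stronger bound $r\delta+1$ via the ``moreover'' clause, so in that parity regime one needs a construction that does not overshoot; extremal examples like $C_{2r}$ (for $\delta=2$) illustrate that the packing must be pushed exactly to the limit. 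Once $T$ is in hand, Lemma \ref{generaltool} and the parity bookkeeping above close the argument.
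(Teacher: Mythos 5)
Your reduction is the right one and matches the paper's: specializing Lemma \ref{generaltool} to $k=2$ (Lemma \ref{maintool} in the paper), a set $T$ of $r$ vertices with no two at mutual distance $2$ gives $n\geq 2\lceil r\delta/2\rceil$ after exactly the parity bookkeeping you describe, and the two-geodesic idea is indeed how the paper builds $T$. But the proposal stops where the proof starts. The selection of $T$ is the entire content of the argument: the paper takes the second path from the \emph{center} towards a vertex far from $v_3$ (so that Observations \ref{bound} and \ref{triangle} force the two paths to diverge within the first few steps and control all cross-distances), and then runs an explicit case analysis over $r\bmod 4$ and the offset $t\in\{0,1,2,3\}$ --- eleven explicit collections in Claim \ref{easycases} alone, plus harder sub-cases introducing a third path and auxiliary vertices $a,b$. ``Filling remaining slots with carefully selected off-path neighbors'' does not substitute for this; in particular, a geodesic of length $\geq r$ started at $u=v_r$ may simply run back along $P$ and contribute no new vertices, so your doubling step is not secured.

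More seriously, your own parenthetical about $r$ odd, $\delta$ even shows the plan cannot be completed as stated. If $|T|=r$ with $r$ odd, the ``moreover'' clause forces $n\geq r\delta+1$, which is false for $C_{2r}$ (there $\delta=2$, $n=r\delta$, radius $r$); hence for such graphs no admissible $T$ of size $r$ exists at all, and no amount of care about ``not overshooting'' can produce one. The paper's resolution is a genuinely different counting argument (Lemma \ref{cycletool}): in the degenerate configurations it exhibits a set $U$ of $2r$ vertices whose distance-$2$ auxiliary graph is a disjoint union of two $r$-cycles, and bounds $n$ by counting neighborhoods over $U$ instead of invoking Lemma \ref{maintool}. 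This second mechanism is absent from your proposal, and without it the cases $r\equiv 1,3\pmod 4$ with $t=3$ (which contain exactly the $C_{2r}$-like configurations) cannot be closed.
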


Let us first see how Theorem \ref{main} follows from these.

\begin{proof}[Proof of Theorem \ref{main} assuming Proposition \ref{smallcases}, Proposition \ref{firstbit} and Proposition \ref{secondbit}]
If $n<4\delta$, it follows from Proposition \ref{secondbit} that $r(n,\delta,4) \leq 3$. The result then follows from Proposition \ref{smallcases}. Assume further $n \geq 4 \delta$.

First consider the case when $\delta$ is odd and $n=k \delta$ for $k$ odd. Using Proposition \ref{firstbit} with $r=\frac{n}{\delta}-1$ and $c=\delta$, we conclude there exists a connected triangle-free graph with $n$ vertices, minimum degree $\delta$ and radius $\frac{n}{\delta}-1$, and hence that $r(n, \delta,4) \geq \frac{n}{\delta}-1$. Also, for any connected triangle-free graph with $n$ vertices and minimum degree $\delta$, it follows in this case from Proposition \ref{secondbit} that its radius $r$ satisfies $r < \frac{n}{\delta}$. Since $r$ is integer, that implies $r \leq \frac{n}{\delta}-1$. Hence, we conclude $r(n, \delta,4) = \frac{n}{\delta}-1$.

Next consider the other case. Using Proposition \ref{firstbit} with $r= \lfloor \frac{n}{\delta} \rfloor$ and $c=n-2 \lceil \frac{r \delta}{2} \rceil$, we conclude there exists a connected triangle-free graph with $n$ vertices, minimum degree $\delta$ and radius $\lfloor \frac{n}{\delta} \rfloor$, and hence that $r(n, \delta,4) \geq \lfloor \frac{n}{\delta} \rfloor$.  Also, for any connected triangle-free graph with $n$ vertices and minimum degree $\delta$, it follows from Proposition \ref{secondbit} that its radius $r$ satisfies $r \leq \frac{n}{\delta}$. Since $r$ is integer, that implies $r \leq \lfloor \frac{n}{\delta} \rfloor$, and hence we conclude $r(n, \delta,4) = \lfloor \frac{n}{\delta} \rfloor$. 
\end{proof}

In the rest of the section, we will prove Propositions \ref{smallcases}, \ref{firstbit} and \ref{secondbit} and thus prove Theorem \ref{main}. The section will be divided into four subsections - in the first subsection we prove Proposition \ref{firstbit}; in the second subsection we prove a technical lemma we will need to prove Proposition \ref{secondbit}; in the third subsection we prove Proposition \ref{smallcases}; in the fourth subsection we prove Proposition \ref{secondbit} when $r=4k$, $r=4k+1$ or $r=4k+2$; and in the final subsection we prove Proposition \ref{secondbit} when $r=4k+3$.

\subsection{Proof of Proposition \ref{firstbit}}

To prove Proposition \ref{firstbit}, it is enough to consider the following simple example.

\begin{proof}[Proof of Proposition \ref{firstbit}]
Consider $2r$ boxes labelled $B_0,...,B_{2r-1}$. Let $|B_i|= \left \lceil{\frac{\delta}{2}}\right \rceil $ when $i \equiv 0,1$ $\mod 4$ and $|B_i|=\left \lfloor{\frac{\delta}{2}}\right \rfloor$ when $i \equiv 2,3 $ $\mod 4$. If $c$ is such that at this point we have less than $2 \lceil \frac{r \delta}{2} \rceil + c$ vertices, put the remaining vertices into any of these boxes arbitrarily. Then connect all vertices in $B_i$ to all vertices in $B_j$ whenever $i-j \equiv \pm 1$ $\mod 2r$. It is now easy to see that this graph has required properties for any choice of parameters $r,\delta,c$ in our range.
\end{proof}

\subsection{Technical lemma}

First, recall Lemma \ref{generaltool} which implies the following result for triangle-free graphs.

\begin{lemma}\label{maintool}
Let $G$ be a triangle-free graph on $n$ vertices and with minimum degree $\delta$. Then for any subset $T \subset V(G)$ such that no two vertices of $T$ are at mutual distance $2$, we have $n \geq 2\left\lceil \frac{\delta |T|}{2} \right\rceil$.
\end{lemma}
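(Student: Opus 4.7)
The plan is to derive Lemma \ref{maintool} as a direct application of Lemma \ref{generaltool} with $k=2$. Since $G$ is triangle-free, its girth is at least $4=2k$, so the first hypothesis of Lemma \ref{generaltool} is satisfied. For the condition on $T$: any two non-adjacent vertices of $T$ have distance at least $2$ in $G$, and the assumption rules out distance exactly $2$, so non-adjacent pairs in $T$ have distance at least $3 = 2k-1$. Thus the setup of Lemma \ref{generaltool} applies.

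Invoking that lemma yields $n \geq |T|\,\delta\,(\delta-1)^{k-2} = \delta|T|$, and moreover $n \geq \delta|T|+1$ whenever $|T|$ is odd.

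It remains to match this with the stated bound $n \geq 2\lceil \delta|T|/2\rceil$. I would simply unpack the ceiling by parity. If $\delta|T|$ is even, then $2\lceil \delta|T|/2\rceil = \delta|T|$ and the inequality $n \geq \delta|T|$ already gives what we need. If $\delta|T|$ is odd, then both $\delta$ and $|T|$ are odd, so the moreover clause applies and gives $n \geq \delta|T|+1 = 2\lceil \delta|T|/2\rceil$, again as required.

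There is essentially no obstacle here; the only subtlety is remembering to use the sharper \emph{moreover} clause precisely in the parity case where the naive bound $\delta|T|$ would fall one short of $2\lceil \delta|T|/2\rceil$. Everything else is a direct specialization $k=2$ of the general tool already proved.
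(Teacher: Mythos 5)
Your proof is correct and is exactly the route the paper takes: the paper states Lemma \ref{maintool} as an immediate consequence of Lemma \ref{generaltool} with $k=2$, and you have correctly filled in the two details it leaves implicit, namely that ``distance $\neq 2$'' upgrades non-adjacent pairs to distance $\geq 3 = 2k-1$, and that the \emph{moreover} clause covers the case where $\delta|T|$ is odd so that $2\lceil \delta|T|/2\rceil = \delta|T|+1$.
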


We will also need another lemma of similar flavour here.

\begin{lemma}\label{cycletool} 
Let $G$ be a triangle-free graph on $n$ vertices and with minimum degree $\delta$. Assume for some $r \geq 4$, we have a subset $U \subset V(G)$ such that $|U|=2r$ and $U$ is as follows: if we consider auxiliary graph $H$ such that $V(H)=U$ and in which we connect two vertices if their distance in $G$ is precisely $2$, then $H$ is disjoint union of two cycles of length $r$. Then we have $n \geq 2 \left\lceil \frac{r \delta}{2}  \right\rceil $. 
\end{lemma}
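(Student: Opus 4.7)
The plan is to apply Lemma~\ref{maintool} to a carefully chosen $T\subseteq V(G)$ of size $r$, no two of whose elements lie at $G$-distance $2$; this would immediately give $n\geq 2\lceil r\delta/2\rceil$. Denote the two $r$-cycles of $H$ by $U_1=\{a_0,\dots,a_{r-1}\}$ and $U_2=\{b_0,\dots,b_{r-1}\}$, indexed so that the $H$-edges are the consecutive pairs modulo $r$; equivalently, $d_G(a_i,a_j)=2$ iff $|i-j|\equiv \pm 1 \pmod r$, analogously in $U_2$, while no cross pair lies at $G$-distance $2$ (since $H$ has no edges between $U_1$ and $U_2$).

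When $r$ is even, I would take $T=\{a_0,a_2,\dots,a_{r-2}\}\cup\{b_0,b_2,\dots,b_{r-2}\}$. This is $H$-independent of size exactly $r$: within each cycle the chosen indices differ by $2$, and no $H$-edges join $U_1$ to $U_2$. Lemma~\ref{maintool} then closes this case at once.

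When $r$ is odd, each odd $r$-cycle has independence number only $(r-1)/2$, so the largest $H$-independent set inside $U$ has size only $r-1$; applying Lemma~\ref{maintool} to $T_0=\{a_0,a_2,\dots,a_{r-3}\}\cup\{b_0,b_2,\dots,b_{r-3}\}$ only yields $n\geq (r-1)\delta$, short of the goal by roughly $\delta$. To close this deficit, the plan is to supplement $T_0$ with one extra vertex $w\in V(G)\setminus U$ satisfying $d_G(w,v)\neq 2$ for every $v\in T_0$, and then invoke Lemma~\ref{maintool} on $T=T_0\cup\{w\}$ of size $r$. A natural candidate is a neighbor $w$ of the leftover vertex $a_{r-1}$ chosen to avoid $\bigcup_{v\in T_0} N(v)$: because $a_{r-1}$ is $H$-adjacent (hence at $G$-distance $2$) only to $a_0$ among $T_0$-vertices, such a $w$ is non-adjacent to every $v\in T_0$ and shares no common neighbor with any $v\in T_0$, yielding $d_G(w,v)\geq 3$ throughout.

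The main obstacle is guaranteeing that such a $w$ actually exists. The construction fails in the degenerate case $N(a_{r-1})\subseteq N(a_0)$, or when some $v\in T_0$ happens to be $G$-adjacent to $a_{r-1}$ (forcing $a_{r-1}$ itself to be a common neighbor of $w$ and $v$). In those cases I would pivot to the symmetric leftover $b_{r-1}$, exploiting $N(a_{r-1})\cap N(b_{r-1})=\emptyset$, which follows from triangle-freeness and the absence of $H$-edges across the two cycles. Should both leftovers fail, the fallback is a direct inclusion-exclusion bound on $|\bigcup_{v\in T_0\cup\{a_{r-1},b_{r-1}\}} N(v)|$, which by the disjointness observations retains only the two overlap terms $N(a_{r-1})\cap N(a_0)$ and $N(b_{r-1})\cap N(b_0)$; these must then be controlled by a structural argument tailored to the $H$-cycle configuration in order to recover the stated bound $n\geq 2\lceil r\delta/2\rceil$.
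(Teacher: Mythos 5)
Your even-$r$ case is fine: an $H$-independent set of size $r$ exists, and Lemma~\ref{maintool} finishes immediately. The genuine gap is in the odd-$r$ case, specifically in the claim that a neighbour $w$ of $a_{r-1}$ chosen outside $\bigcup_{v\in T_0}N(v)$ ``shares no common neighbor with any $v\in T_0$.'' Avoiding $\bigcup_{v\in T_0}N(v)$ only makes $w$ \emph{non-adjacent} to each $v\in T_0$; it says nothing about whether $w$ and some $v\in T_0$ have a common neighbour elsewhere in $G$, which is exactly what $d_G(w,v)=2$ means for non-adjacent vertices. The justification you offer (that $a_{r-1}$ is $H$-adjacent only to $a_0$ among $T_0$) constrains the distances from $a_{r-1}$, not from $w$: nothing in the hypotheses controls the second neighbourhood of an arbitrary neighbour of $a_{r-1}$, so $w$ may well sit at distance $2$ from several vertices of $T_0$ and Lemma~\ref{maintool} cannot be applied to $T_0\cup\{w\}$. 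Your fallback inclusion--exclusion bound, as you compute it, loses up to $\delta$ on each of the two overlap terms $N(a_{r-1})\cap N(a_0)$ and $N(b_{r-1})\cap N(b_0)$ and lands back at roughly $(r-1)\delta$, so it does not close the deficit either; the ``structural argument tailored to the $H$-cycle configuration'' that you defer to is precisely the missing content.

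For comparison, the paper's proof sidesteps independent sets entirely and works uniformly in the parity of $r$: it double-counts $\sum_{i}|N(c_i)|\geq r\delta$ over \emph{all} $r$ vertices of one cycle, observes that a vertex of $G$ lying in three of these neighbourhoods would force a triangle in $H$ (impossible in a cycle of length $r\geq 4$), hence each vertex is counted at most twice and $|\bigcup_i N(c_i)|\geq\lceil r\delta/2\rceil$; the two unions coming from the two cycles are disjoint because no $c_i$ and $d_j$ are at distance $2$. If you want to salvage your approach, replacing the search for $w$ by this ``each vertex covered at most twice'' counting is the way to do it.
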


\begin{proof}
Let $c_1,\dots,c_r$ and $d_1,\dots,d_r$ be our two cycles of length $r$ in $H$. Consider the open neighbourhoods $N(c_1)$,\dots,$N(c_{r})$. On one hand, we have $|N(c_i)|\geq \delta$ for $1 \leq i \leq r$. On the other hand, each $v \in V(G)$ can be contained in the neighbourhood of at most two vertices from $\left\{ c_1,\dots,c_r \right\}$. Indeed, if $v$ is contained in $N(c_i),N(c_j),N(c_k)$, then as $G$ is triangle-free, we get $d(c_i,c_j)=d(c_i,c_k)=d(c_j,c_k)=2$, so $H$ contains $K_3$, a contradiction. 

Further, no vertex can be contained both in some set of the form $N(c_i)$ and in some set of the form $N(d_j)$. This is true because no $c_i$ and $d_j$ are at mutual distance $2$, and hence by the triangle-free condition can not share a neighbour.

Let $B$ be the set of vertices of $G$ contained in at least one set of the form $N(c_i)$ (and hence no set of the form $N(d_j)$). By above, we find

\begin{equation*}
2|B| \geq |N(c_1)|+...+|N(c_{r})| \geq r \delta.
\end{equation*}

Since $|B|$ is integer, we have $|B| \geq \lceil \frac{r \delta}{2} \rceil$.

Let $B'$ be the set of vertices of $G$ contained in at least one set of the form $N(d_j)$. By analogous argument, we get $|B'| \geq \lceil \frac{r \delta}{2} \rceil$. Using that $B,B'$ are disjoint, we obtain $n \geq 2 \lceil \frac{r \delta}{2} \rceil$. 
\end{proof}

\subsection{Proof of Proposition \ref{smallcases}} Here we handle the small radius cases.

\begin{proof}[Proof of Proposition \ref{smallcases}]
Consider a connected triangle-free graph $G$ on $n$ vertices of radius $r$ and minimum degree $\delta \geq 2$. We must have $r\geq 2$, since the only connected triangle-free graphs of radius $1$ are star graphs, but those have minimum degree $1$.

Now consider any two adjacent vertices $a,b \in V(G)$. It follows from Lemma \ref{maintool} applied to $T= \left\{a,b \right\}$ that $n \geq 2 \delta$. 

If $r=3$, then we can take $a,b$ which instead satisfy $d(a,b) \geq 3$. But then even their closed neighbourhoods are disjoint, which implies $|V(G)| \geq |N[a] \cup N[b]|=|N[a]|+|N[b]| \geq 2\delta+2$.

Now if $n\geq 2 \delta$, we can easily check that $K_{\delta,n-\delta}$ is a connected triangle-free graph on $n$ vertices of radius $2$ and minimum degree $\delta$.

If $n \geq 2\delta+2$, start with a complete bipartite graph $K_{\delta+1,n-\delta-1}$ with vertex classes $\left\{ v_1,...,v_{\delta+1} \right\}$ and $\left\{ w_1,...,w_{n-\delta-1} \right\}$. Erase the edges $v_1 w_1$, $v_2 w_2$,..., $v_{\delta+1}w_{\delta+1}$, $v_{\delta+1}w_{\delta+2}$,...,$v_{\delta+1}w_{n-\delta-1}$. The resulting graph is a connected triangle-free graph on $n$ vertices of radius $3$ and minimum degree $\delta$.
\end{proof}

\subsection{Proof of Proposition \ref{secondbit} for $r=4k,4k+1,4k+2$}

In this subsection, we prove the following.

\begin{prop}\label{mostvals}
If $G$ is a connected triangle-free graph on $n$ vertices with minimum degree $\delta \geq 2$ and radius $r \geq 4$ such that $r=4k+i$ for some $k$ and some $i \in \left\{ 0,1,2 \right\}$, then we have $n \geq  2 \lceil \frac{r \delta}{2}  \rceil$. 
\end{prop}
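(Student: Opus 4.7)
The plan is to apply Lemma~\ref{maintool} with a set $T \subseteq V(G)$ of size exactly $r$ no two of whose vertices lie at graph distance exactly $2$; that yields the desired bound $n \geq 2\lceil r\delta/2\rceil$ at once.

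To build $T$, I would fix a center $c$ with $\mathrm{ecc}(c)=r$, a peripheral vertex $u$ with $d(c,u)=r$, and a shortest $c$--$u$ geodesic $P\colon c=v_0,v_1,\dots,v_r=u$. Since subpaths of geodesics are geodesics, $d(v_i,v_j)=|i-j|$ for all $i,j\in\{0,\dots,r\}$. On $P$, the adjacent pairs $(v_{4j+1},v_{4j+2})$ have the key property that vertices within a pair are at distance $1$, while any two vertices from distinct pairs are at geodesic distance at least $3$ (so in particular not at distance $2$ in $G$). This already contributes approximately $r/2$ vertices to $T$, and the remaining task is to double this contribution.

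For the additional vertices, I would exploit the minimum-degree hypothesis $\delta\geq 2$ to step off the geodesic. Each interior $v_i$ has its neighbours confined to the BFS levels $L_{i-1},L_i,L_{i+1}$ from $c$, and beyond its two geodesic neighbours has at least one further neighbour $v_i^\ast$ when $\delta\geq 3$; when $\delta=2$ I would use the extra neighbours of $v_0=c$ and $v_r=u$ together with a downstream propagation. Matching these off-geodesic vertices against each other or against their $P$-companions produces further adjacent pairs that can be added to $T$. The three cases $r=4k,4k+1,4k+2$ differ only in how the indexing is closed off near the two ends of $P$: how many pairs on $P$ are complete, whether a lone vertex at distance $\geq 3$ from everything already selected is appended, and where on $P$ the off-geodesic contribution has to be shifted. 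The residue $r\equiv 3\pmod 4$ is precisely the one on which this uniform matching between on- and off-geodesic pairs breaks down, and is postponed to the next subsection.

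The main technical hurdle will be controlling cross-distances inside $T$: two off-geodesic vertices at nearby BFS levels could a priori share a common neighbour in $P$, which would force them to lie at distance $2$ and wreck the construction. Here the triangle-free hypothesis is essential, since it forbids common neighbours between any two adjacent vertices and severely restricts how vertices at the same level can be interconnected; a short case analysis should always rule out the bad configurations. In the residual configurations where a direct distance-$2$-independent set of size $r$ on $P$ and its off-geodesic extensions cannot quite be realised, I would fall back on Lemma~\ref{cycletool} applied to two disjoint $r$-cycles in the distance-$2$ auxiliary graph extracted from the BFS around $P$; this yields the same bound $n\geq 2\lceil r\delta/2\rceil$ and serves as a safety net for the boundary subcases.
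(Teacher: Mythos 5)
Your overall frame---apply Lemma~\ref{maintool} to a set $T$ of $r$ vertices with no two at distance exactly $2$, built around adjacent pairs $v_{4j+1},v_{4j+2}$ spaced four apart along a geodesic from a center, with Lemma~\ref{cycletool} as a fallback---matches the paper's. But the mechanism you propose for finding the second half of $T$ does not work, and this is where the real content of the proof lies.

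The gap is in the plan to obtain the remaining $\sim r/2$ vertices by ``stepping off the geodesic'' to extra neighbours $v_i^\ast$ of the $v_i$. You have no control on the mutual distances of these off-geodesic vertices: for nearby indices $i,j$ the vertices $v_i^\ast, v_j^\ast$ can freely be at distance $2$, and the triangle-free hypothesis does not rule this out. The extremal construction of Proposition~\ref{firstbit} makes this concrete: take $2r$ boxes $B_0,\dots,B_{2r-1}$ of size about $\delta/2$ arranged in a cycle with complete bipartite joins between consecutive boxes. Any geodesic from $v_0\in B_0$ to $v_r\in B_r$ meets one vertex per box $B_0,\dots,B_r$, and every off-geodesic neighbour of every $v_i$ also lies in one of these boxes. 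Two vertices in boxes $B_i,B_j$ with $|i-j|\in\{0,2\}$ are always at distance $2$, so from $B_0,\dots,B_{r+1}$ one can select at most one vertex per box and at most two per window of four consecutive boxes, i.e.\ roughly $r/2$ vertices in total. A set of $r$ pairwise non-distance-$2$ vertices exists in this graph, but only by using the boxes $B_{r+1},\dots,B_{2r-1}$ on the far side of the cycle, which no neighbour of the first geodesic reaches. The paper's key idea, absent from your plan, is to take a \emph{second long geodesic} $v_0=v_0',v_1',\dots,v_{r-t}'$ to a vertex $v_{r-t}'$ chosen with $d(v_3,v_{r-t}')\geq r$ (or $=r$ with a carefully chosen center when $v_3$ is itself a center); Observation~\ref{bound} gives $t\leq 3$ and Observation~\ref{triangle} then yields the lower bounds $d(v_i,v_j')\geq d(v_3,v_{r-t}')+3+t+j-r-i$ that let one interleave pairs from both paths into a single admissible $T$ of size $r$. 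The case analysis over $t\in\{0,1,2,3\}$ and $r\bmod 4$, a third auxiliary vertex $v_{r-s}''$ in the case $r=4k+1$, $t=3$, and the final appeal to Lemma~\ref{cycletool} when two genuine distance-$2$ $r$-cycles appear, all hang off this second geodesic; without it none of these steps can be set up.
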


\begin{proof}
Let $v_0$ be a center of our graph $G$, which moreover has the property that every other center of $G$ is at distance $r$ from at least as many vertices as $v_0$ is. Let $v_r$ be any vertex such that $d(v_0,v_r)=r$. Let $v_0,v_1,...,v_r$ be a path of length $r$ from $v_0$ to $v_r$.

Let $v_{r-t}'$ be a following vertex: if $v_3$ is not a center of $G$, then $v_{r-t}'$ is any vertex such that $d(v_3,v_{r-t}') \geq r+1$. And if $v_3$ is a center of $G$, then we let $v_{r-t}'$ be such a vertex that $d(v_3,v_{r-t}') = r$ and $d(v_0,v_{r-t}') <r$ (such a vertex exists by a choice of $v_0$). 

Then denote $d(v_0,v_{r-t}')=r-t$ for some $t \geq 0$. Let $v_0=v_0',v_1',...,v_{r-t}'$ be a path of length $r-t$ from $v_0$ to $v_{r-t}'$. It follows from Observation \ref{bound} that $t \leq 3$.

\begin{claim}\label{easycases}
Assume that either $r=4k$ (and $t$ is any), or $r=4k+2$ (and $t$ is any), or $r=4k+1$ and $0 \leq t \leq 2$. Then we have $n \geq  2 \lceil \frac{r \delta}{2}  \rceil$.
\end{claim}

\begin{proof}[Proof of Claim \ref{easycases}]
We will show that in each of these cases, we can find a collection $C$ of $r$ vertices in $G$ such that no two are at mutual distance $2$. The result then follows from Lemma \ref{maintool}.

Depending on the values of $r$ and $t$, choose $C$ to be the following collection.

\begin{center}
\begin{tabular}{ |c|c|c| } 
 \hline
 $r=4k$ & $t=0$ & $v_3,v_4,v_7,v_8,...,v_{4k-1}, v_{4k},v_3',v_4',v_7',v_8',...,v_{4k-1}', v_{4k}'$ \\ 
 $r=4k$ & $t=1$ & $v_0,v_3,v_4,v_7,v_8,...,v_{4k-1}, v_{4k},v_3',v_4',v_7',v_8',...,v_{4k-5}', v_{4k-4}',v_{4k-1}'$ \\ 
 $r=4k$ & $t=2$ & $v_3,v_4,v_7,v_8,...,v_{4k-1}, v_{4k},v_1',v_2',v_5',v_6',...,v_{4k-3}', v_{4k-2}'$ \\ 
 $r=4k$ & $t=3$ & $v_0,v_3,v_4,v_7,v_8,...,v_{4k-1}, v_{4k},v_1',v_4',v_5',v_8',v_9',...,v_{4k-4}', v_{4k-3}'$ \\ 
 $r=4k+1$ & $t=0$ & $v_0,v_4,v_5,v_8,v_9,...,v_{4k}, v_{4k+1},v_4',v_5',v_8',v_{9}',...,v_{4k}', v_{4k+1}'$ \\ 
 $r=4k+1$ & $t=1$ &  $v_0,v_3,v_4,v_7,v_8,...,v_{4k-1}, v_{4k},v_3',v_4',v_7',v_{8}',...,v_{4k-1}', v_{4k}'$\\ 
 $r=4k+1$ & $t=2$ & $v_0,v_1,v_4,v_5,v_8,v_{9},...,v_{4k}, v_{4k+1},v_3',v_4',v_7',v_{8}',...,v_{4k-5}',v_{4k-4}',v_{4k-1}' $ \\
 $r=4k+2$ & $t=0$ & $v_0,v_1,v_5,v_6,v_9,v_{10},...,v_{4k+1}, v_{4k+2},v_5',v_6',v_9',v_{10}',...,v_{4k+1}', v_{4k+2}'$ \\ 
 $r=4k+2$ & $t=1$ &  $v_0,v_1,v_4,v_5,v_8,v_{9},...,v_{4k}, v_{4k+1},v_4',v_5',v_8',v_{9}',...,v_{4k}', v_{4k+1}'$\\ 
 $r=4k+2$ & $t=2$ & $v_0,v_1,v_5,v_6,v_9,v_{10},...,v_{4k+1}, v_{4k+2},v_3',v_4',v_7',v_{8}',...,v_{4k-1}', v_{4k}'$ \\ 
 $r=4k+2$ & $t=3$ & $v_1,v_2,v_5,v_6,v_9,v_{10},...,v_{4k+1}, v_{4k+2},v_2',v_3',v_6',v_{7}',...,v_{4k-2}', v_{4k-1}'$ \\ 
 \hline
\end{tabular}
\end{center}

We need to check two things; that $C$ genuinely consists of $r$ distinct vertices, and that no two vertices of $C$ have mutual distance $2$. 

None of the collections above contains both $v_1$ and $v_1'$. For all other pairs $v_i,v_j'$, it follows in our particular case from Observation \ref{distinct} that $v_i \neq v_j'$. So $C$ consists of $r$ distinct vertices.

Note that $v_0,...,v_r$ is a path of length $r$ and $v_0',...,v_{r-t}'$ is a path of length $r-t$. Hence we can trivially check that $C$ contains no two vertices of the form $v_i,v_j$ such that $d(v_i,v_j)=2$ and no two vertices of the form $v_i',v_j'$ such that $d(v_i',v_j')=2$.

Finally, to check that $C$ contains no two vertices of the form $v_i,v_j'$ such that $d(v_i,v_j')=2$, we first note that by Observation \ref{distinct} we would have to have $|i-j| \leq 2$.

First consider the case $i \geq 3$. Note that by our choice of $v_0$, $v_{r-t}'$, we always have either $t \geq 1$ or $d(v_3,v_{r-t}') \geq r+1$. If $j \geq i-1$, it follows from Observation \ref{triangle} that $d(v_i,v_j') \geq 3$. If $j=i-2$, $d(v_i,v_j') \geq 3$ follows from Observation \ref{triangle} under additional assumption that $d(v_3,v_{r-t}')+t \geq r+2$. Hence, for $i \geq 3$, it is enough if $C$ does not contain both $v_i$ and $v_{i-2}'$ in the case when we have $d(v_3,v_{r-t}')+t \leq r+1$.

Next, consider the case $i=2$. It follows from Observation \ref{triangle} that it suffices to ensure that if our collection contains $v_2$, then it does not contain:

\begin{itemize}
    \item $v_1'$ in the case $d(v_3,v_{r-t}')+t \leq r+2$
    \item $v_2'$ in the case $d(v_3,v_{r-t}')+t \leq r+1$
\end{itemize}

Finally, consider the case $i=1$. It follows from Observation \ref{triangle} that it suffices to ensure that if our collection contains $v_1$, then it does not contain:

\begin{itemize}
    \item $v_1'$ in the case $v_1 \neq v_1'$
    \item $v_2'$ in the case $d(v_3,v_{r-t}')+t \leq r+2$
    \item $v_3'$ in the case $d(v_3,v_{r-t}')+t \leq r+1$
\end{itemize}

Recall that if $t=0$, then $d(v_3,v_{r-t}') \geq r+1$. Hence, it can be checked trivially going through the discussion above that in each of the cases, no two vertices in $C$ are at mutual distance $2$. The result follows.
\end{proof}

\begin{claim}\label{hardcase}
If $r=4k+1$ and $t=3$, we have $n \geq  2 \lceil \frac{r \delta}{2}  \rceil$.
\end{claim}

\begin{proof}[Proof of Claim \ref{hardcase}]
We let $v_{r-s}''$ be such a vertex that $d(v_1,v_{r-s}'') \geq r$, then $d(v_0,v_{r-s}'')=r-s$ for some $0 \leq s \leq 1$.

First consider the case when $d(v_{r-s}'',v_{4k-2}') \geq 3$. Then let 
\begin{equation*}
T= \left\{ v_2,v_3,v_6,v_7,...,v_{4k-2},v_{4k-1},v_1',v_2',v_5',v_6',...,v_{4k-3}',v_{4k-2}',v_{r-s}'' \right\}    
\end{equation*}

Assume for a contradiction two vertices of $T$ have mutual distance 2. It follows from Observation \ref{triangle} that one of them has to be $v_{r-s}''$. Since for any $v,w\in V(G)$, we have $d(v,w) \geq |d(v,v_0)-d(w,v_0)|$ and $d(v_{r-s}'',v_{4k-2}') \geq 3$ by assumption, it further follows that the other vertex would have to be $v_{4k-2}$ or $v_{4k-1}$. Note that if $d(v_i,v_{r-s}'') \leq 2$ for some $1 \leq i \leq 4k-1$, then $d(v_1,v_{r-s}'') \leq d(v_1,v_i)+d(v_i,v_{r-s}'') \leq (4k-2)+2<r$, yielding a desired contradiction. Hence, no two vertices of $T$ have mutual distance $2$ while $|T|=r$. The result then follows from Lemma \ref{maintool}.

Next, consider the case $d(v_{r-s}'',v_{4k-2}') < 3$. Since $$d(v_{r-s}'',v_{4k-2}') \geq|d(v_{r-s}'',v_0)-d(v_{4k-2}',v_0)|\geq 3-s \geq 2,$$ this means $s=1$ and $d(v_{r-1}'',v_{4k-2}') =2$. Hence there exists a vertex $a$, such that $a$ is neighbour of both $v_{r-1}''$ and $v_{4k-2}'$. Moreover, clearly $d(a,v_0)=r-2$.

Consider two cases. If $d(a,v_{4k+1}) \geq 3$, take
\begin{equation*}
T= \left\{ v_1,v_2,v_5,v_6,...,v_{4k-3},v_{4k-2},v_{4k+1},v_2',v_3',v_6',v_7',...v_{4k-6}',v_{4k-5}',v_{4k-2}',a \right\}    
\end{equation*}

Assume for a contradiction two vertices of $T$ have mutual distance 2. It follows from Observation \ref{triangle} one of them has to be $a$. Since for any $v,w$ in $G$, we have $d(v,w) \geq |d(v,v_0)-d(w,v_0)|$ and $d(a,v_{4k+1}) \geq 3$ and $d(a,v_{4k-2}')=1$, the other vertex has to be $v_{4k-3}$ or $v_{4k-2}$. Note that if $d(a,v_{i}) \leq 2$ for some $3 \leq i \leq 4k-2$, then $$d(v_3,v_{4k-2}') \leq d(v_3,v_i)+d(v_i,a)+d(a,v_{4k-2}) \leq (4k-5)+2+1<r,$$ a contradiction. Hence, no two vertices of $T$ have mutual distance $2$ and $|T|=r$. The result follows from Lemma \ref{maintool}.

Next, consider the case $d(a,v_{4k+1}) < 3$. By the triangle inequality, we have $d(a,v_{4k+1}) \geq |d(a,v_0)-d(v_0,v_{4k+1})|= 2$, so that $d(a,v_{4k+1}) =2$. Hence, there exists a vertex $b$ such that $b$ is neighbour of both $a$ and $v_{4k+1}$. Consider 
\begin{equation*}
U= \left\{ v_0,v_1,v_2,v_3,...,v_{4k+1},v_{1}',v_2',...,v_{4k-2}',a,b \right\}.   
\end{equation*}

We have $|U|=8k+2=2r$. Consider auxiliary graph $H$ on $V(H)=U$ in which we connect two vertices if their distance in $G$ is precisely $2$. $H$ is union of two disjoint cycles of length $r$, first being $v_0,v_2,...,v_{4k},b,v_{4k-2}',...,v_2'$, and second being $v_1,v_3,...,v_{4k+1},a,v_{4k-3}',...,v_1'$. The result then follows from Lemma \ref{cycletool}. The only non trivial relationships needed to prove that $H$ is union of two disjoint cycles of length $r$ are

\begin{equation*}
d(b,v_{4k-1}),d(b,v_{4k-2}),d(a,v_{4k}),d(a,v_{4k-1}),d(a,v_{4k-2}),d(a,v_{4k-3}) \geq 3 
\end{equation*}

If any of these distances was at most $2$, we could find a path of length at most $r-1$ from $v_3$ to $v_{4k-2}'$. That would be a contradiction. 
\end{proof}

Putting Claim \ref{easycases} and Claim \ref{hardcase} together now finishes the proof of Proposition \ref{mostvals}.
\end{proof}

\subsection{Proof of Proposition \ref{secondbit} for $r=4k+3$}

In this subsection, we prove the following.

\begin{prop}\label{oneval}
If $G$ is a connected triangle-free graph on $n$ vertices with minimum degree $\delta \geq 2$ and radius $r \geq 4$ such that $r=4k+3$ for some $k$, then we have $n \geq  2 \lceil \frac{r \delta}{2}  \rceil$. 
\end{prop}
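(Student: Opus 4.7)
The plan is to mirror the structure of the proof of Proposition~\ref{mostvals} for the remaining residue class $r\equiv 3\pmod 4$. First I would set up the identical framework used there: fix a center $v_0$ of $G$ chosen so that every other center has at least as many vertices at distance $r$ as $v_0$ does, take any $v_r$ with $d(v_0,v_r)=r$ and a geodesic path $v_0,v_1,\dots,v_r$. Then pick an auxiliary vertex $v'_{r-t}$ according to the same dichotomy: if $v_3$ is not a center, let $v'_{r-t}$ be any vertex with $d(v_3,v'_{r-t})\ge r+1$; if $v_3$ is a center, use the extremal choice of $v_0$ to produce $v'_{r-t}$ with $d(v_3,v'_{r-t})=r$ and $d(v_0,v'_{r-t})<r$. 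Fix a geodesic $v_0=v'_0,v'_1,\dots,v'_{r-t}$. By Observation~\ref{bound}, $t\in\{0,1,2,3\}$.

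The bulk of the proof is a case analysis on $t$. For each of $t=0,1,2,3$ I would write a row of a table (in the style of the one inside Claim~\ref{easycases}) exhibiting a collection $C\subset V(G)$ of exactly $r=4k+3$ vertices, drawn from the two paths as pairs of consecutive vertices spaced by $3$, so that any two elements of $C$ coming from the same path are automatically at distance $\ne 2$. For concreteness, a natural candidate in the $t=0$ case is
\[
C=\{v_0,v_3,v_4,v_7,v_8,\dots,v_{4k-1},v_{4k},v_{4k+3}\}\cup\{v'_3,v'_4,v'_7,v'_8,\dots,v'_{4k-1},v'_{4k},v'_{4k+3}\},
\]
which has $(2k+2)+(2k+1)=r$ elements; analogous shifted patterns would be written for $t=1,2,3$. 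For each subcase I would check that $|C|=r$ using Observation~\ref{distinct}, and that no two elements of $C$ lie at mutual distance~$2$ using Observation~\ref{triangle} together with the inequality $d(v_3,v'_{r-t})+t\ge r+1$ guaranteed by the choice of $v'_{r-t}$. Lemma~\ref{maintool} applied to $T=C$ then yields $n\ge 2\lceil r\delta/2\rceil$ in that subcase.

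The main obstacle, by analogy with Claim~\ref{hardcase}, is the extremal subcase (likely $t=3$) in which the naive construction unavoidably contains a distance-$2$ cross-pair. Following the blueprint of Claim~\ref{hardcase}, I would introduce an auxiliary vertex $v''_{r-s}$ with $d(v_1,v''_{r-s})\ge r$, giving $0\le s\le 1$, and split on whether a distance between $v''_{r-s}$ and an appropriate $v'_j$ is at least~$3$. In the \emph{far} branch one builds a set $T$ of $r$ vertices including $v''_{r-s}$ with no two at mutual distance~$2$ and concludes via Lemma~\ref{maintool}. In the \emph{close} branch, the forced short distance produces a common neighbour $a$ and, if needed, a second common neighbour $b$; one then assembles a set $U$ of $2r$ vertices whose distance-$2$ auxiliary graph is a disjoint union of two cycles of length $r$, and concludes via Lemma~\ref{cycletool}. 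The only non-trivial input there is verifying the handful of distance-$\ge 3$ inequalities needed for $U$ to split into two cycles, each of which follows because its failure would give a path of length less than $r$ from $v_3$ to $v'_{r-t}$, contradicting the defining property of $v'_{r-t}$.

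The difficulty is therefore purely bookkeeping: producing a clean table of collections $C$ for the easy subcases and carrying out the two-cycle argument in the extremal one. I do not expect any new idea beyond those already deployed in Section~3 to be necessary; combining all subcases completes Proposition~\ref{oneval}, and together with Proposition~\ref{mostvals} this finishes the proof of Proposition~\ref{secondbit}.
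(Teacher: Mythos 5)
Your overall blueprint matches the paper's: a case analysis on $t\in\{0,1,2,3\}$, explicit collections fed into Lemma~\ref{maintool}, and a two-common-neighbour / two-cycle argument via Lemma~\ref{cycletool} in the extremal branch of $t=3$. The $t=2$ and $t=3$ cases do go exactly as you describe. But there is a genuine gap: you predict that $t=3$ is the only subcase needing more than a table, whereas for $r=4k+3$ the case $t=1$ is the real obstruction, and no table of vertices drawn from the two paths can work there. A counting argument shows why: a path with $N+1$ vertices admits at most $2\lceil (N+1)/4\rceil$ indices with no two differing by exactly $2$, so the first path ($4k+4$ vertices) yields at most $2k+2$ and the second path with $t=1$ ($4k+3$ vertices) also at most $2k+2$; to reach $r=4k+3$ you must take $2k+2$ from each and share only $v_0$, which forces both patterns to start $\{0,1,\dots\}$ — but then $v_1$ and $v_1'$ are both neighbours of $v_0$ and hence (by triangle-freeness) at mutual distance exactly $2$. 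Any other alignment leaves you at $4k+2=r-1$ vertices, one short. The paper resolves this with an idea absent from your proposal: it first proves an auxiliary claim (Claim~\ref{foury}) that four pairwise-non-distance-$2$ vertices within distance $3$ of $v_0$ suffice to complete the set, and then produces such a quadruple either from a third geodesic (when $v_2$ is not a center) or — the genuinely new move — by \emph{restarting the whole argument with $v_2$ as the center}, using $v_0,v_1,v_4,v_5$ as the quadruple if the restarted argument again lands in the $t=1$ case.

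Two smaller points. First, the paper deliberately uses a \emph{weaker} setup than Proposition~\ref{mostvals} ($v_0$ an arbitrary center, $v'_{r-t}$ only satisfying $d(v_3,v'_{r-t})\ge r$ rather than $r+1$) precisely so that the restart-at-$v_2$ step remains legal; with your stronger extremal choice of $v_0$ that step would need justification. The price is that even $t=0$ is no longer a one-line table: the paper introduces a second auxiliary vertex $v''_{r-s}$ with $d(v_4,v''_{r-s})\ge r$ and, when $s\ge 1$, must separately construct four suitable vertices $z_1,\dots,z_4$ near distance $r-3$ from $v_0$. Second, your candidate set for $t=0$ is only safe under the guarantee $d(v_3,v'_r)\ge r+1$, which your setup does provide when $v_3$ is not a center, but you should note that when $v_3$ is a center your setup forces $t\ge1$ and funnels everything into the problematic case above. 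In short, the bookkeeping is not the only missing content; the $t=1$ case requires a new idea.
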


We use a slightly weaker and more general set-up than we did in the proof of Proposition \ref{mostvals}. This will have the advantage that we have more freedom in our choice of a center $v_0$ as well as in the choice of $v_{r-t}'$.

\begin{proof}
Take $v_0$ to be any center of our graph $G$. Let $v_r$ be any vertex such that $d(v_0,v_r)=r$. Let $v_0,v_1,...,v_r$ be any path of length $r$ from $v_0$ to $v_r$.

Let $v_{r-t}'$ be any vertex such that $d(v_3,v_{r-t}') \geq r$. 

Then we have $d(v_0,v_{r-t}')=r-t$ for some $t \geq 0$. Let $v_0=v_0',v_1',...,v_{r-t}'$ be a path of length $r-t$ from $v_0$ to $v_{r-t}'$. By Observation \ref{bound}, we have $t \leq 3$.

Moreover, consider a vertex $v_{r-s}''$ such that $d(v_4,v_{r-s}'') \geq r$. 

As before, we have $d(v_0,v_{r-s}'')=r-s$ for some $s \geq 0$. Let $v_0=v_0'',v_1'',...,v_{r-s}''$ be a path of length $r-s$ form $v_0$ to $v_{r-s}''$. By Observation \ref{bound}, we have $s \leq 4$.

We will consider four cases depending on the value of $t$.

Consider the case $t=s=0$. Let 
\begin{equation*}
T= \left\{ v_0,v_3,v_3',v_6,v_6'',v_7,v_7'',v_{10},v_{10}'',v_{11},v_{11}'',...,v_{r-5},v_{r-5}'',v_{r-4},v_{r-4}'',v_{r-1},v_{r-1}'',v_r,v_r'' \right\}. 
\end{equation*}
By Observation \ref{triangle}, no two elements of $T$ have mutual distance $2$. The result follows from Lemma \ref{maintool}.\\

Next consider the case $t=0$, $1 \leq s \leq 4$. We claim that we can find four vertices $z_1,z_2,z_3,z_4$ such that no two out of $z_1,z_2,z_3,z_4$ have mutual distance $2$, and $r-3 \geq d(v_0,z_i) \geq r-4$ for $i=1,2,3,4$.

Set $z_1=v_{r-4}$, $z_2=v_{r-3}$, $z_3=v_{r-4}''$. By Observation \ref{triangle}, we immediately see $d(v_{r-4},v_{r-4}'') \geq 5$, $d(v_{r-3},v_{r-4}'') \geq 4$. If we have any vertex $x$ such that $x$ is neighbour of $v_{r-4}''$ and $d(v_0,x) \geq r-4$, we can set $z_4=x$ and are done. If on the other hand there exists no such $x$, that implies $d(v_{r-3}',v_{r-4}'') \geq 3$. By Observation \ref{triangle} we have $d(v_{r-4},v_{r-3}') \geq 4$, $d(v_{r-3},v_{r-3}') \geq 3$, so we can set $z_4=v_{r-3}'$. Hence, we can always find suitable  $z_1,z_2,z_3,z_4$. 
 
Let
\begin{equation*}
T= \left\{ v_0,v_3,v_3'',v_4,v_4'',v_7,v_7'',v_{8},v_{8}'',...,v_{r-8},v_{r-8}'',v_{r-7},v_{r-7}'',z_1,z_2,z_3,z_4,v_r,v_r' \right\}. 
\end{equation*}
It follows from Observation \ref{triangle} that no two elements of $T$ have mutual distance $2$. The result follows from Lemma \ref{maintool}.\\

Consider the case $t=2$. Let
\begin{equation*}
T= \left\{ v_0,v_3,v_4,v_7,v_8,...,v_{4k-1},v_{4k},v_{4k+3},v_1',v_4',v_5',v_{8}',v_9',...,v_{4k}'. v_{4k+1}' \right\}    
\end{equation*}
It follows from Observation \ref{triangle} that no two elements of $T$ have mutual distance $2$. The result follows from Lemma \ref{maintool}.\\

Consider the case $t=3$. Let $w_{r-u}$ be so that $d(v_1,w_{r-u}) \geq r$ and $d(v_0,w_{r-u})=r-u$ for some $0 \leq u \leq 1$.

First, consider the case when $d(w_{r-u},v_{4k}') \geq 3$. Let 
\begin{equation*}
T= \left\{ v_0,v_1,v_4,v_5,...,v_{4k},v_{4k+1},v_3',v_4',v_7',v_8',...,v_{4k-1}',v_{4k}',w_{r-u}\right\}    
\end{equation*}
Assume for a contradiction that two vertices of $T$ have mutual distance $2$. It follows from Observation \ref{triangle} that one of them has to be $w_{r-u}$. Since for any $v,w$ in $G$, we have $d(v,w) \geq |d(v,v_0)-d(w,v_0)|$ and  $d(w_{r-u},v_{4k}') \geq 3$, it further follows that the other vertex would have to be $v_{4k}$ or $v_{4k+1}$. If we had $d(v_i,w_{r-u}) \leq 2$ for some $1 \leq i \leq 4k+1$, then $d(v_1,w_{r-u}) \leq d(v_1,v_i)+d(v_i,w_{r-u}) \leq 4k+2<r$, a contradiction. Hence, no two vertices of $T$ have mutual distance $2$, and $|T|=r$. The result follows from Lemma \ref{maintool}.

Next, consider the case $d(w_{r-u},v_{4k}') < 3$. Since $d(w_{r-u},v_{4k}') \geq |d(w_{r-u},v_0)-d(v_{4k}',v_0)|\geq 3-u \geq 2$, this means $u=1$ and $d(w_{r-1},v_{4k}') =2$. Hence there exists a vertex $a$ such that $a$ is neighbour of both $w_{r-1}$ and $v_{4k}'$. Moreover, clearly $d(a,v_0)=r-2$.

Consider two cases. If $d(a,v_{4k+3}) \geq 3$, let
\begin{equation*}
T= \left\{ v_0,v_3,v_4,v_7,v_8,...,v_{4k-1},v_{4k},v_{4k+3},v_1',v_4',v_5',v_8',v_9',...,v_{4k-4}',v_{4k-3}',v_{4k}',a \right\}.
\end{equation*}
Assume for a contradiction that two vertices of $T$ have mutual distance $2$. It follows from Observation \ref{triangle} that  one of them has to be $a$. Since for any $v,w$ in $G$, we have $d(v,w) \geq |d(v,v_0)-d(w,v_0)|$ and $d(a,v_{4k+3}) \geq 3$ and $d(a,v_{4k}')=1$, the other has to be $v_{4k-1}$ or $v_{4k}$. Since $d(a,v_{i}) \leq 2$ for some $3 \leq i \leq 4k$, we find $$d(v_3,v_{4k}') \leq d(v_3,v_i)+d(v_i,a)+d(a,v_{4k}) \leq (4k-3)+2+1<r,$$ a contradiction. Hence, no two vertices of $T$ have mutual distance $2$ while $|T|=r$. The result follows from Lemma \ref{maintool}.

Next, consider the case $d(a,v_{4k+3}) < 3$. By the triangle inequality, we have $d(a,v_{4k+3}) \geq 2$, so that $d(a,v_{4k+3}) =2$. Hence, there exists a vertex $b$, such that $b$ is neighbour of both $a$ and $v_{r}$. Consider 
\begin{equation*}
U= \left\{ v_0,v_1,v_2,v_3,...,v_{4k+3},v_{1}',v_2',...,v_{4k}',a,b \right\}    
\end{equation*}

We have $|U|=8k+6=2r$. Consider the auxiliary graph $H$ on $V(H)=U$ in which two vertices are connected if their distance in $G$ is precisely $2$. $H$ is union of two disjoint cycles of length $r$, first being $v_0,v_2,...,v_{4k+2},b,v_{4k}',...,v_2'$, and second being $v_1,v_3,...,v_{4k+3},a,v_{4k-1}',...,v_1'$. Indeed, the only non trivial relationships needed to prove that $H$ is union of two disjoint cycles of length $r$ are
\begin{equation*}
d(b,v_{4k+1}),d(b,v_{4k}),d(a,v_{4k+2}),d(a,v_{4k+1}),d(a,v_{4k}),d(a,v_{4k-1}) \geq 3 .
\end{equation*}
If any of these distances was at most $2$, we could find a path of length at most $r-1$ from $v_3$ to $v_{4k}'$.
The result follows from Lemma \ref{cycletool}.\\

Finally, consider the case $t=1$. 

\begin{claim}\label{foury}
Assume $r \geq 4$, $r=4k+3$ and $t=1$. Further assume there are four distinct vertices $y_1,y_2,y_3,y_4$ such that no two out of them have mutual distance $2$, and $d(v_0,y_i) \leq 3$ for $i=1,2,3,4$. Then we have $n \geq  2 \lceil \frac{r \delta}{2}  \rceil$.
\end{claim}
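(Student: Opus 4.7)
The approach is to apply Lemma~\ref{maintool} to a carefully chosen subset $T \subseteq V(G)$ of size $r = 4k+3$ with the property that no two elements of $T$ are at mutual distance $2$. The four vertices $y_1, y_2, y_3, y_4$ play the role of a ``cap'' near the center $v_0$, replacing the vertices $v_0, v_1, v_2, v_3$ that would otherwise have served this purpose in the template of Claims~\ref{easycases} and \ref{hardcase}. The rest of $T$ consists of paired runs along the two geodesics $v_0, v_1, \ldots, v_r$ and $v_0 = v_0', v_1', \ldots, v_{r-1}'$.

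Concretely, I would take
$$T = \{y_1, y_2, y_3, y_4\} \cup \{v_6, v_7, v_{10}, v_{11}, \ldots, v_{4k+2}, v_{4k+3}\} \cup \{v_6', v_7', v_{10}', v_{11}', \ldots, v_{4k-2}', v_{4k-1}', v_{4k+2}'\},$$
which has $4 + 2k + (2k-1) = r$ elements. Distinctness of the three blocks is immediate from comparing distances to $v_0$: we have $d(v_0, y_i) \leq 3$ while every indexed vertex is at distance at least $6$ from $v_0$. Distinctness of $v_i$ from $v_j'$ within the last two blocks reduces (via Observation~\ref{distinct}, using $d(v_3, v_{r-1}') \geq r$) to the bound $r \leq d(v_3, v_i) + d(v_j', v_{r-1}') = r-4$, a contradiction.

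For the distance-$2$ verification there are four kinds of pair. Pairs of $y_i$'s are ruled out by hypothesis. A pair $(y_i, v_j)$ or $(y_i, v_j')$ is safe because $d(y_i, v_j) \geq j - 3 \geq 3$ for $j \geq 6$. Within either path-block the chosen indices differ consecutively by $1$ or by at least $3$, so no intra-block pair is at distance $2$. The subtle case---exactly as in Claim~\ref{hardcase}---is a cross-path pair $(v_i, v_j')$: combining $d(v_3, v_{r-1}') \geq r$ with the triangle inequality $d(v_3, v_{r-1}') \leq (i-3) + d(v_i, v_j') + (r-1-j)$ (cf.\ Observation~\ref{triangle}) shows that $d(v_i, v_j') \leq 2$ forces $i - j \geq 2$, whereas a direct inspection of the index sets shows that for every $v$-index $i$, the value $i-2$ is absent from the $v'$-indices.

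The main obstacle is exactly this last bookkeeping step: the offsets of the $v$-block and the $v'$-block must be chosen so that the forbidden gap $i-j=2$ never occurs, and in particular the terminal pair $(v_r, v_{r-1}') = (v_{4k+3}, v_{4k+2}')$ sits at gap $1$ rather than $2$. Once this has been checked, Lemma~\ref{maintool} immediately yields $n \geq 2\lceil r\delta/2\rceil$, completing the proof of Claim~\ref{foury}.
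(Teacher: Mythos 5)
Your proposal is correct and coincides with the paper's own proof: it uses the identical set $T=\{y_1,y_2,y_3,y_4\}\cup\{v_6,v_7,\dots,v_{4k+2},v_{4k+3}\}\cup\{v_6',v_7',\dots,v_{4k-1}',v_{4k+2}'\}$ and concludes via Lemma~\ref{maintool}. The only difference is that you spell out the distance-$2$ and distinctness checks that the paper compresses into a single appeal to Observation~\ref{triangle}, and your bookkeeping (in particular ruling out the gap $i-j=2$ by comparing residues mod $4$) is accurate.
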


\begin{proof}[Proof of Claim \ref{foury}]
Let
\begin{equation*}
T= \left\{ y_1,y_2,y_3,y_4,v_6,v_7,v_{10},v_{11},...,v_{4k+2},v_{4k+3},v_6',v_7',v_{10}',v_{11}',...,v_{4k-2}',v_{4k-1}',v_{4k+2}' \right\}.
\end{equation*}
It follows by Observation \ref{triangle} that no two vertices of $T$ have mutual distance $2$. We also have $|T|=r$. The result follows by Lemma \ref{maintool}.
\end{proof}

We return to the proof of Proposition \ref{oneval} in the case $t=1$.

First consider the case when $v_2$ is not a center of $G$. Then there exists a vertex $c$ such that $d(v_2,c) \geq r+1$, and by the triangle inequality $d(v_0,c) \geq r-1$ and $d(v_3,c) \geq r$. We consider two cases: if $d(v_0,c)=r$, we could have chosen $c$ in place of $v_{r-t}'$ (as $d(v_3,c) \geq r$) and pass to a case $t=0$ which we already solved. If, on the other hand, $d(v_0,c)=r-1$, then let $v_0=v_0''',v_1''',...,v_{r-1}'''=c$ be a path of length $r-1$ from $v_0$ to $c$. No two out of $v_3,v_2,v_3''',v_2'''$ can have mutual distance $2$ by Observation \ref{triangle}, using that $d(v_2,c) \geq r+1$. Hence, we conclude by using Claim \ref{foury} for $y_1=v_3$, $y_2=v_2$, $y_3=v_3'''$, $y_4=v_2'''$.

Next, consider the case that $v_2$ is a center of $G$. Then we have $v_0,v_1,v_4,v_5$ such that $3 \geq d(v_2,v_0),d(v_2,v_1),d(v_2,v_4),d(v_2,v_5)$ and no two out of $v_0,v_1,v_4,v_5$ have mutual distance $2$. Now start the proof again with $v_0^\dagger:=v_2$ instead of $v_0$ (choosing some vertices $v_r^\dagger$ and $(v'_{r-t^{\dagger}})^{\dagger}$ in place of $v_r$, and $v_{r-t}'$). If $t^\dagger\neq1$, then the conclusion follows as before. If $t^\dagger=1$, then we can find four distinct vertices $y_1=v_0$, $y_2=v_1,$ $y_3=v_4$, $y_4=v_5$ such that no two out of them have mutual distance $2$, and $d(v_2,y_i) \leq 3$ for $i=1,2,3,4$. We conclude with Claim \ref{foury}.

This finishes the proof of Proposition \ref{oneval}.
\end{proof}

\section{General problem for girth $g \geq 5$}

We first establish Theorem \ref{upbound} using Lemma \ref{generaltool}.

\begin{proof}[Proof of Theorem \ref{upbound}]
We will find large enough collection of vertices $T$ such that no two non-adjacent vertices of $T$ are at mutual distance less than $2k-1$. The result then follows by Lemma \ref{generaltool}. 

Let $v_0$ be a center of $G$, $v_r$ a vertex with $d(v_0,v_r)=r$, and $v_0,v_1,...,v_{r-1},v_r$ a path of length $r$ in $G$ from $v_0$ to $v_r$. If we had $r \leq 2k$, we know the formula holds, so assume $r \geq 2k$. We let $v_{r-t}'$ be a vertex such that $d(v_{2k},v_{r-t}') \geq r$ and denote $d(v_0,v_{r-t}')=r-t$ for some $0 \leq t \leq 2k$. Further, let $v_0=v_0',v_1',...,v_{r-t}'$ be a path of length $r-t$ from $v_0$ to $v_{r-t}'$.

Let 
\begin{align*}
T= \left\{ v_{2ki} \, \, \, | \, \, \, 0 \leq i \leq \left\lfloor \frac{r}{2k} \right\rfloor \right\} \cup \left\{ v_{2ki+1} \, \, \, | \, \, \, 0 \leq i \leq \left\lfloor \frac{r}{2k} \right\rfloor -1 \right\} \\ \cup \left\{ v_{2ki}' \, \, \, | \, \, \, 1 \leq i \leq \left\lfloor \frac{r}{2k} \right\rfloor -1 \right\} \cup \left\{ v_{2ki+1}' \, \, \, | \, \, \, 1 \leq i \leq \left\lfloor \frac{r}{2k} \right\rfloor -2 \right\}.
\end{align*}

It follows from Observation \ref{distinct} that the above is a disjoint union. It follows from Observation \ref{triangle} that no two non-adjacent vertices of $T$ are at mutual distance less than $2k-1$. Hence, we conclude by Lemma \ref{generaltool} that $n \geq |T| \delta(\delta-1)^{k-2} \geq (\frac{2r}{k}-6)\delta(\delta-1)^{k-2}$.
\end{proof}

We prove the next proposition using an idea similar to one of Erd\H{o}s, Pollack, Pach and Tuza \cite{Erdos}. Its most important corollary is Theorem \ref{specialgirths}.

\begin{prop}\label{lobound}
Denote by $f(g,\delta)$ for $\delta \geq 2$, $g \geq 3$ the minimum number of vertices in the graph of girth at least $g$ and minimum degree $\delta$. Then for any $r > \frac{g}{2}$, there exists a connected graph $G$ on $n = \lceil \frac{2r}{g} \rceil f(g,\delta) $ vertices of girth at least $g$, minimum degree $\delta$ and radius at least $r$.
\end{prop}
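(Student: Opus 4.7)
I plan to construct $G$ by chaining $m := \lceil 2r/g\rceil$ disjoint copies of the minimum graph $H$ (on $f(g,\delta)$ vertices with girth $\geq g$ and minimum degree $\delta$), generalizing the cyclic Erd\H{o}s--Pach--Pollack--Tuza construction from the triangle-free case. Since $H$ has a $g$-cycle, it contains an edge $e = pq$ with $d_{H-e}(p,q) \geq g-1$ (any alternative $p$--$q$ path together with $e$ would form a cycle of length $< g$, contradicting girth). In each copy $H_i$, with corresponding edge $e_i = p_iq_i$, I would delete $e_i$ and add a ``bridge'' edge $q_ip_{i+1}$ to the next copy (indices mod $m$). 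This yields a graph $G$ with exactly $mf(g,\delta)$ vertices and no identifications.

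I would verify the easy structural properties as follows. The minimum degree stays $\delta$: each modified vertex $p_i$ or $q_i$ loses one edge ($e_i$) and gains one (the bridge), while all other vertices are untouched. The girth is still $\geq g$: any cycle contained in some $H_i-e_i$ has length $\geq g$, while any cycle using some bridge edge must use all $m$ bridges (they form a cyclic cut, since deleting any one disconnects the chain into a path), and must traverse each $H_i-e_i$ from $p_i$ to $q_i$ over a path of length $\geq g-1$, for total length $\geq m(g-1)+m = mg \geq g$.

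The hard step is the radius lower bound. The heuristic is that $G$ behaves like a ``thickened cycle'' of length $mg$, so opposite vertices should be at distance $\approx mg/2 \geq r$. Rigorously, for any $v \in H_i$ and its antipode $w\in H_{i+\lfloor m/2\rfloor}$, I would argue via a cut argument that any $v$--$w$ path must cross at least $\lfloor m/2\rfloor$ of the bridges (going around either way), and in each intermediate copy $H_k$ the path must travel from the entry bridge-endpoint $p_k$ to the exit bridge-endpoint $q_k$ inside $H_k-e_k$, contributing $\geq g-1$ edges by the girth condition. Summing, $d_G(v,w) \gtrsim \lfloor m/2\rfloor\cdot g \geq r$.

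The main obstacle is the accounting at the endpoint copies containing $v$ and $w$: internal edges of $H_i - e_i$ and $H_{i+\lfloor m/2\rfloor} - e_{i+\lfloor m/2\rfloor}$ provide shortcuts to bridge endpoints, potentially shaving off up to a diameter-of-$H$ amount at each end. The plan for overcoming this is to choose $w$ as a function of $v$ so that its distance to \emph{both} bridge endpoints of its copy is large enough to absorb this loss, using the girth condition to force such a $w$ to exist. This mirrors the bookkeeping in the triangle-free case, where the cyclic blow-up structure guarantees such antipodal choices work uniformly.
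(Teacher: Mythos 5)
Your construction is exactly the paper's: delete an edge $pq$ lying on a cycle from each of $\lceil 2r/g\rceil$ copies of a minimum $(g,\delta)$-graph, use $d_{H-e}(p,q)\ge g-1$, and join the copies cyclically by bridge edges. The paper simply asserts the resulting radius bound with no verification at all, so your sketch of the cut argument and the balanced choice of the antipodal vertex $w$ is, if anything, more complete than the paper's own proof; and the plan does go through, since the clockwise and counterclockwise lower bounds on $d(v,w)$ sum to at least $m(g-1)+m=mg\ge 2r$, so one can choose $w$ along the canonical cycle so that both exceed $r$.
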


\begin{proof}
Let $H$ be a connected graph such that $|V(H)|=f(g,\delta)$, the minimum degree of $H$ is $\delta$ and the girth of $H$ is at least $g$. As $\delta>1$, we know $H$ contains some cycle. Let $v,w$ be two neighbouring vertices of $H$ such that the edge $vw$ is part of some cycle. Let $H'$ be the (still connected) graph obtained by deleting the edge $vw$ from $H$. By the girth condition, we have $d_{H'}(v,w) \geq g-1$.

Take $\lceil \frac{2r}{g} \rceil $ identical disjoint copies of $H'$, called $H_1',...,H_{\lceil \frac{2r}{g} \rceil}'$, with vertices $v_1,...,v_{\lceil \frac{2r}{g} \rceil}$ and $w_1,...,w_{\lceil \frac{2r}{g} \rceil}$, and connect $v_i$ to $w_{i+1}$, where $w_{\lceil \frac{2r}{g} \rceil+1}=w_1$. The resulting graph has radius at least $r$, girth at least $g$, minimum degree $\delta$ and $\lceil \frac{2r}{g} \rceil f(g,\delta)$ vertices.
\end{proof}

Theorem \ref{specialgirths} follows easily.

\begin{proof}[Proof of Theorem \ref{specialgirths}] We know (see \cite{cages}) that when $\delta-1$ is a prime power, then $f(6,\delta) \leq 2(\delta^{2}-\delta+1)$, $f(8,\delta) \leq 2(\delta^{3}-2\delta^{2}+2\delta)$, and $f(12,\delta) \leq 2((\delta-1)^{3}+1)(\delta^{2}-\delta+1)$. Hence the result follows directly from Proposition \ref{lobound} by taking $n_i=\lceil \frac{i}{3} \rceil f(6,\delta)$, $n_j=\lceil \frac{j}{4} \rceil f(8,\delta)$, $n_k=\lceil \frac{k}{6} \rceil f(12,\delta)$.
\end{proof}

Finally, we prove Proposition \ref{erdos}.

\begin{proof}[Proof of Proposition \ref{erdos}]
Let $v_0$ be a center of our graph, $v_r$ a vertex with $d(v_0,v_r)=r$ and $v_0,...,v_r$ a path of length $r$.

Consider the sets $Q(v_0),...,Q(v_r)$, defined for each $v_i$ on our geodesic to be the points at distance at most $k$ from $v_i$. Every vertex in our graph is in at most $2k+1$ of these sets, so in particular some of these sets contains no more than $(2k+1)c \delta^{k-1}$ vertices. 

Also, it follows same as in the proof of Lemma \ref{generaltool} that each vertex has at least $\delta(\delta-1)^{k-2}$ vertices at distance at most $k-1$ from it. Hence, as all edges from these vertices are included in $Q(v_i)$, we get that for every $i$ the subgraph induced by $Q(v_i)$ has at least $\frac{1}{2}\delta^{2}(\delta-1)^{k-2}$ edges.

Putting this together, we get a connected graph of girth at least $2k$ on at most $(2k+1)c \delta^{k-1}$ vertices with at least $\frac{1}{2}\delta^{2}(\delta-1)^{k-2}$ edges.
\end{proof}

\section*{Acknowledgements}

The authors would like to thank their PhD supervisor professor B\'ela Bollob\'as for his support.

\appendix
\section{Useful observations}\label{observ}
We formulate some observations used throughout the proof in the following general setting. 

Let $G$ be a graph with $n$ vertices and radius $r$. We take $v_0$ to be some fixed center of $G$. We let $v_r$ be a vertex such that $d(v_0,v_r)=r$, and let $v_0,v_1,...,v_r$ be a path of length $r$ from $v_0$ to $v_r$.

Fix an integer $m\in\{1,\dots, r-1\}$, and let $v'$ be a vertex such that $d(v_m,v') \geq r$. Then let $t\geq 0$ be such that $d(v_0,v')=r-t$ , and let $v_0=v_0',v_1',...,v_{r-t}'=v'$ be a path of length $r-t$ form $v_0$ to $v'=v_{r-t}'$.

\begin{obs}\label{bound}
We have $t \leq m$.
\end{obs}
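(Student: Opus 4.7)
The plan is to prove this by a single application of the triangle inequality, using that $v_0$ lies on the path $v_0, v_1, \ldots, v_m, \ldots, v_r$, so $d(v_0, v_m) = m$.

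First, I would bound $d(v_m, v')$ from above using the triangle inequality through $v_0$:
\[
d(v_m, v') \leq d(v_m, v_0) + d(v_0, v') = m + (r - t),
\]
where the second equality uses $d(v_0, v_m) = m$ (because $v_0, \ldots, v_m$ is a sub-path of the geodesic $v_0, \ldots, v_r$ of length $r$, hence is itself a shortest path) together with the defining relation $d(v_0, v') = r - t$.

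Next, I would combine this with the hypothesis $d(v_m, v') \geq r$ to obtain
\[
r \leq d(v_m, v') \leq m + r - t,
\]
and rearranging gives $t \leq m$, as desired. There is no real obstacle here; the only subtlety worth flagging is that $d(v_0, v_m) = m$ requires $v_0, \ldots, v_m$ to be a geodesic, which follows from the fact that any sub-path of a shortest path is itself a shortest path.
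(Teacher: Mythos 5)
Your proof is correct and is essentially identical to the paper's: both apply the triangle inequality $d(v_m,v')\leq d(v_m,v_0)+d(v_0,v')=m+(r-t)$ and combine it with $d(v_m,v')\geq r$ (the paper phrases this as a contradiction from assuming $t>m$, you rearrange directly). Your remark about sub-paths of geodesics is fine but not even needed, since the triangle inequality only requires $d(v_m,v_0)\leq m$.
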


\begin{proof}[Proof of Observation \ref{bound}]
Assume for contradiction that we had $t>m$. Then by a triangle inequality, $d(v_m,v_{r-t}') \leq d(v_m,v_0)+d(v_0,v_{r-t}') =m+(r-t)<r$, a contradiction.
\end{proof}

\begin{obs}\label{triangle}
For any $m \leq i \leq r$ and any $0 \leq j \leq r-t$, we have $d(v_i,v_j') \geq d(v_m,v_{r-t}')+m+t+j-r-i$, and for any $i<m$ and any $0 \leq j \leq r-t$, we have $d(v_i,v_j') \geq d(v_m,v_{r-t}')+i+j+t-m-r$. Moreover, in either of these cases, we also have $d(v_i,v_j') \geq |i-j|$.
\end{obs}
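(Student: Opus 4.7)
The plan is to derive both inequalities directly from the triangle inequality applied to the quantity $d(v_m, v_{r-t}')$, using the fact that consecutive vertices on the two paths are at distance $1$ from each other. Specifically, for any $i, j$ in the allowed ranges, the triangle inequality gives
\[
d(v_m, v_{r-t}') \leq d(v_m, v_i) + d(v_i, v_j') + d(v_j', v_{r-t}').
\]
Since $v_0, v_1, \dots, v_r$ is a path of length $r$, we have $d(v_m, v_i) \leq |i - m|$, and similarly $d(v_j', v_{r-t}') \leq (r-t) - j$ because $v_0', v_1', \dots, v_{r-t}'$ is a path. Rearranging and splitting into the cases $i \geq m$ (where $|i-m| = i - m$) and $i < m$ (where $|i - m| = m - i$) yields the two displayed inequalities respectively.

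For the final inequality $d(v_i, v_j') \geq |i - j|$, I would first note that $v_0 = v_0'$ lies on both paths. The assumption that $v_0$ is a center of radius $r$ and that $d(v_0, v_r) = r$ forces $d(v_0, v_i) = i$ exactly for every $0 \leq i \leq r$: the path gives the upper bound $d(v_0, v_i) \leq i$, while the reverse triangle inequality gives $d(v_0, v_i) \geq d(v_0, v_r) - d(v_i, v_r) \geq r - (r - i) = i$. The same reasoning, applied to the path $v_0', \dots, v_{r-t}'$ (whose endpoint satisfies $d(v_0, v_{r-t}') = r - t$), yields $d(v_0, v_j') = j$. The reverse triangle inequality then gives $d(v_i, v_j') \geq |d(v_0, v_i) - d(v_0, v_j')| = |i - j|$.

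The proof is essentially bookkeeping, so the only real subtlety is the extraction of the exact distances $d(v_0, v_i) = i$ and $d(v_0, v_j') = j$; this relies crucially on $v_0$ being a center and on the existence of the long geodesic to $v_r$, rather than being obvious from the path structure alone. Once this is in hand, the two main inequalities and the "moreover" statement follow from a single invocation of the triangle inequality in each case, with a routine sign flip to handle the split at $i = m$.
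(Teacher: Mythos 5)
Your proposal is correct and follows essentially the same route as the paper: both main inequalities come from the triangle inequality applied to $d(v_m,v_{r-t}')$ with the path-length bounds, and the final claim from the reverse triangle inequality via $d(v_0,v_i)=i$ and $d(v_0,v_j')=j$. Your extra justification that the paths are geodesics (which needs only $d(v_0,v_r)=r$ and $d(v_0,v_{r-t}')=r-t$, not the center hypothesis) is a welcome clarification of a step the paper leaves implicit.
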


\begin{proof}[Proof of Observation \ref{triangle}]
For the case $m \leq i \leq r$, note that $d(v_m,v_{r-t}') \leq d(v_m,v_i)+d(v_i,v_j')+d(v_j',v_{r-t}') =(i-m)+d(v_i,v_j')+(r-t-j)$. Rearranging gives the result.

For the case $i<m$, note that $d(v_m,v_{r-t}') \leq d(v_m,v_i)+d(v_i,v_j')+d(v_j',v_{r-t}') =(m-i)+d(v_i,v_j')+(r-t-j)$. Rearranging gives the result.

For the last claim, note that by triangle inequality $d(v_i,v_j') \geq |d(v_i,v_0)-d(v_0,v_j)'|=|i-j|$.
\end{proof}

\begin{obs}\label{distinct}
We can not have $v_i=v_i'$ for any $i > \frac{m+r-t-d(v_m,v_{r-t}')}{2}$, and we can not have $v_i=v_j'$ for any $i \neq j$.
\end{obs}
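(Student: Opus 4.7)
The plan is to exploit two elementary facts about the paths involved. Since $v_0, v_1, \ldots, v_r$ has length $r$ and $d(v_0, v_r) = r$, it must be a geodesic, so $d(v_0, v_i) = i$; likewise $d(v_0, v_j') = j$. The second claim is then immediate: if $v_i = v_j'$, then $i = d(v_0, v_i) = d(v_0, v_j') = j$, so distinct indices force distinct vertices.

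For the first claim, I would argue by contradiction. Suppose $v_i = v_i'$; note this already forces $0 \leq i \leq r - t$, otherwise $v_i'$ is undefined. Concatenating the sub-path of $v_0, \ldots, v_r$ from $v_m$ to $v_i$ (of length $|m - i|$) with the sub-path of $v_0', \ldots, v_{r-t}'$ from $v_i' = v_i$ to $v_{r-t}'$ (of length $r - t - i$) produces a walk from $v_m$ to $v_{r-t}'$, so
\begin{equation*}
d(v_m, v_{r-t}') \;\leq\; |m - i| + (r - t - i).
\end{equation*}
If $i \geq m$, the right-hand side collapses to $r - t - m$, which is strictly less than $r$ because $m \geq 1$ and $t \geq 0$; this contradicts the hypothesis $d(v_m, v_{r-t}') \geq r$. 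If $i < m$, the bound becomes $m + r - t - 2i$, which rearranges to $i \leq \frac{m + r - t - d(v_m, v_{r-t}')}{2}$, the exact negation of the stated threshold. Since the first case ($i \geq m$) automatically exceeds the threshold (as $\tfrac{m + r - t - d(v_m, v_{r-t}')}{2} \leq \tfrac{m - t}{2} < m$), the two cases together cover all $i$ above the threshold.

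This is essentially a triangle-inequality estimate applied to the concatenation of two geodesic sub-segments, so I do not expect any substantive obstacle. The only points that require minor care are handling the two signs of $i - m$ uniformly and observing that $v_i'$ is only defined for $i \leq r - t$ — neither of which complicates the computation.
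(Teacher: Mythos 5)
Your proof is correct and follows essentially the same route as the paper's: the second claim via $d(v_0,v_i)=i\neq j=d(v_0,v_j')$, and the first claim via the triangle inequality through $v_m \to v_i \to v_{r-t}'$, which is exactly the content of Observation~\ref{triangle} (cited by the paper, inlined by you). The only cosmetic difference is that you re-derive that estimate and split on the sign of $i-m$ explicitly rather than invoking the observation.
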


\begin{proof}[Proof of Observation \ref{distinct}]
Assume that $v_i=v_i'$ for some $r-t \geq i > \frac{m+r-t-d(v_m,v_{r-t}')}{2}$. Then we obtain contradiction, as $d(v_i,v_i')>0$ by Observation \ref{triangle}.

We can not have $v_i=v_j'$ for any $i \neq j$, since $d(v_0,v_i)=i \neq j =d(v_0,v_j')$. 
\end{proof}


\begin{thebibliography}{VdB}

\bibitem[1]{Erdosgirth}
P.~Erdos \emph{Extremal problems in graph theory}, In Proc. Symp. Theory of Graphs and its Applications (1963), 2936.


\bibitem[2]{Erdos}
P.~Erdos, J. Pach, R. Pollack, Z. Tuza \emph{Radius, Diameter and Minimum Degree}, Journal of Combinatorial Theory, Series B \textbf{47} (1989), 73--79.

\bibitem[3]{cages}
G. Exoo, R. Jajcay \emph{Dynamic cage survey}, Electronic Journal of Combinatorics (2013), DS 16.


\end{thebibliography}
\end{document}